\theoremstyle{plain}
\newtheorem{theorem}{Theorem}
\newtheorem{lemma}[theorem]{Lemma}
\newtheorem{observation}[theorem]{Observation}
\theoremstyle{remark} % normal
	\newtheorem{claim}{Claim}
\newcommand{\figlabel}[1]{\label{fig:#1}}
\newcommand{\figref}[1]{Figure~\ref{fig:#1}}
\newcommand{\spacing}[1]{\renewcommand{\baselinestretch}{#1}\setlength{\footnotesep}{\baselinestretch\footnotesep}}
\newcommand{\Figure}[4][htb]{
\begin{figure}[#1]
	\vspace*{1ex}
	\begin{center}#3\end{center}
	\vspace*{-1ex}
	\caption{\figlabel{#2}#4}
\end{figure}
}
\DeclareMathOperator{\ES}{ES}
\DeclareMathOperator{\conv}{conv}
\newcommand{\bc}{\begin{center}}
\newcommand{\ec}{\end{center}}
\newcommand{\resultconst}{8}
\begin{document}
%\linenumbers
\sloppy

\title{Empty pentagons in point sets
 with collinearities}

\thanks{This research was supported by the DAAD and the Go8 within the Australia--Germany Joint Research Co-operation Scheme 2011/12 as part of the project \emph{Problems in geometric graph theory} (Kennz. 50753217).% In particular it provided travel funding for Payne, Scharf, Schymura and Wood.
}

\thanks{%\tiny%
J\'anos Bar\'at is supported by the Hungarian National Science
Foundation (OTKA) Grant K~76099,
and Australian Research Council (ARC) grant
DP120100197.
Vida Dujmovi\'c is supported by the Natural Sciences and Engineering
Research Council (NSERC) of Canada, and by an Endeavour Fellowship
from the Australian Government.
Gwena\"el Joret is a Postdoctoral Researcher of the Fonds
National de la Recherche Scientifique (F.R.S.--FNRS), and is also
supported by an Endeavour Fellowship. %from the Australian Government.
Michael Payne is supported by an Australian Postgraduate Award from the Australian Government.
Ludmila Scharf is supported by the German Research Foundation (DFG) Grant AL 253/7-1.
Daria Schymura was supported by the DFG within the Priority Programme 1307 \textit{Algorithm Engineering}.
Pavel Valtr is supported by the Ministry of Education of the Czech Republic under project CE-ITI (GA\v CR P2020/12/G061).
David Wood is supported by a QEII Research Fellowship from the ARC}
%Australian Research Council.}

\subjclass[2000]{52C10 Erd\H os problems and related topics of discrete geometry.}

\date{\today}

\author[]{J\'anos Bar\'at}
\address{\newline School of Mathematical Sciences
\newline Monash University
\newline Victoria 3800, Australia}
\email{janos.barat@monash.edu}   

\author[]{Vida Dujmovi\'c}
\address{\newline School of Computer Science
\newline Carleton University
\newline Ottawa, Canada}
\email{vida@scs.carleton.ca}

\author[]{Gwena\"el Joret}
\address{\newline  D\'epartement d'Informatique
\newline Universit\'e Libre de Bruxelles
\newline Brussels, Belgium}
\email{gjoret@ulb.ac.be}

\author[]{Michael~S.~Payne}
\address{\newline Department of Mathematics and Statistics, 
\newline The University of Melbourne
\newline Melbourne, Australia}
\email{m.payne3@pgrad.unimelb.edu.au}

\author[]{Ludmila Scharf}
\address{\newline Institut f\"ur Informatik
\newline Freie Universit\"at Berlin
\newline Berlin, Germany}
\email{scharf@mi.fu-berlin.de}

\author[]{Daria Schymura}
\address{\newline Institut f\"ur Informatik
\newline Freie Universit\"at Berlin
\newline Berlin, Germany}
\email{daria.schymura@gmx.net}

\author[]{Pavel Valtr}
\address{\newline Department of Applied Mathematics and Institute for Theoretical Computer Science (CE-ITI), 
\newline Charles University
\newline Prague, Czech Republic}

\author[]{David~R.~Wood}
\address{\newline Department of Mathematics and Statistics, 
\newline The University of Melbourne
\newline Melbourne, Australia}
\email{woodd@unimelb.edu.au}

\begin{abstract}
An empty pentagon in a point set $P$ in the plane is a set of five points in $P$ in strictly convex position with no other point of $P$ in their convex hull.
%It is proved
We prove that every finite set of at least $328\ell^2$ points in the plane contains an empty pentagon or $\ell$ collinear points. This is optimal up to a constant factor since the $(\ell -1)\times(\ell-1)$ grid  contains no empty pentagon and no $\ell$ collinear points. The previous best known bound was doubly exponential.
\end{abstract}

%\thanks{David Wood is supported by a QEII Reseedgeh Fellowship from the Australian Reseedgeh Council.}

%\subjclass[2000]{52C10 Erd\H os problems and related topics of discrete geometry}

\maketitle

\section{Introduction}\label{intro}

The Erd\H os-Szekeres Theorem~\cite{erdosszek}, a classical result in discrete geometry, states that for every integer $k$ there is a minimum integer $\ES(k)$ such that every set of at least $\ES(k)$ points in general position in the plane contains $k$ points in convex position. 
Erd\H os~\cite{erdos} asked whether a similar result held for empty $k$-gons ($k$ points in convex position with no other points inside their convex hull). %MAYBE INCLUDED, DEPENDING ON JOURNAL.
Horton~\cite{horton} answered this question in the negative by showing that there are arbitrarily large point sets in general position that contain no empty heptagon.
On the other hand, Harborth~\cite{harborth} showed that every set of at least $10$ points in general position contains an empty pentagon.
More recently, Nicol\'as~\cite{nicolas} and Gerken~\cite{gerken} independently settled the question for $k=6$ by showing that sufficiently large point sets in general position always contain empty hexagons; see also~\cite{koshelev,pavel}.

These questions are not interesting if the general position condition is abandoned completely, since a collinear point set contains no three points in convex position.
However, considering point sets with a bounded number of collinear points does lead to interesting generalisations of these problems.
First some definitions are needed.
A point set $X$ in the plane is in \emph{weakly convex position} if every point in $X$ lies on the boundary of $\conv(X)$, the convex hull of $X$.
A point $x \in X$ is a \emph{corner} of $X$ if $\conv(X \setminus \{x\}) \neq \conv(X)$.
The set $X$ is in \emph{strictly convex position} if every point in $X$ is a corner of $X$.
%Thus a set in strictly convex position is also in weakly convex position.
A \emph{weakly} (respectively \emph{strictly}) \emph{convex $k$-gon} is a set of $k$ points in weakly (respectively strictly) convex position.
It is well known that the Erd\H os-Szekeres theorem generalises for point sets with bounded collinearities; see~\cite{abeletal} for proofs. 
One generalisation states that every set of at least $\ES(k)$ points contains a weakly convex $k$-gon. 
For strictly convex position, the generalisation states that for all integers $k$ and $\ell$ there exists a minimum integer $\ES(k,\ell)$ such that every set of at least $\ES(k,\ell)$ points in the plane contains $\ell$ collinear points or a strictly convex $k$-gon.

This paper addresses the case of empty pentagons in point sets with collinearities.
A subset $X$ of a point set $P$ is an \emph{empty $k$-gon} if $X$ is a strictly convex $k$-gon %contained in $P$ that contains no other point of $P$ in its convex hull.
and $P \cap \conv(X) = X$.
Abel et al.~\cite{abeletal} showed that every finite set of at least $\ES(\frac{(2\ell-1)^\ell-1}{2\ell-2})$ points in the plane contains an empty pentagon or $\ell$ collinear points. 
%Valtr~\cite{pavelpc} improved this bound to $\ES((5\ell -3)2^{\ell -1})$.
The function $\ES(k)$ is known to grow exponentially~\cite{erdosszek,erdosszek2}, so %even this improved 
this bound is doubly exponential in $\ell$.
See \citep{Rabinowitz,Eppstein-JoCG} for more on point sets with no empty pentagon.
In the present paper the following theorem is proved without applying the Erd\H os-Szekeres Theorem.
\begin{theorem}\label{mainthm}
Let $P$ be a finite set of points in the plane.
If $P$ contains at least $328\ell^2$ points, then $P$ contains an empty pentagon or $\ell$ collinear points.
\end{theorem}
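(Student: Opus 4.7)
The plan is to exploit visibility from an extreme point of $P$ in order to build an empty pentagon out of empty triangles sharing that vertex.

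Assume throughout that $P$ has no $\ell$ collinear points; the goal is then to produce an empty pentagon. Choose $p\in P$ to be the lowest point (breaking ties lexicographically), so $p$ lies on $\conv(P)$. Each ray from $p$ through another point of $P$ contains at most $\ell-1$ points of $P\setminus\{p\}$, for otherwise these together with $p$ would form $\ell$ collinear points. Let $Q\subseteq P\setminus\{p\}$ consist of the point closest to $p$ on each such ray; then
\[
|Q|\;\geq\;\frac{|P|-1}{\ell-1}\;\geq\;C\ell
\]
with $C$ well over $300$ when $|P|\geq 328\ell^2$. Sort $Q=(q_1,\dots,q_m)$ by angle around $p$.

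The crucial observation is that every triangle $T_i:=\conv\{p,q_i,q_{i+1}\}$ is empty of points of $P$ besides its three vertices: any putative $x\in P\cap T_i$ would have an angular direction from $p$ either equal to that of $q_i$ or of $q_{i+1}$ (contradicting the nearest-point choice that produced $Q$) or strictly between them (contradicting the fact that $Q$ contains a representative on every ray through a point of $P$). Hence if for some $i$ the four consecutive points $q_i,q_{i+1},q_{i+2},q_{i+3}$ form a convex chain that curves away from $p$ (strict outward turns at $q_{i+1}$ and $q_{i+2}$), then the pentagon $p\,q_i\,q_{i+1}\,q_{i+2}\,q_{i+3}$ is strictly convex, and its interior decomposes as $T_i\cup T_{i+1}\cup T_{i+2}$, so the pentagon is empty. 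The search is thereby reduced to finding four consecutive members of $Q$ that form such a convex $4$-chain.

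The main obstacle is the possibility that no such chain exists: then the sequence of turn orientations along $q_1,q_2,\dots,q_m$ must essentially alternate, producing a zigzag with many valleys pointing towards $p$. To break this case I would pass to the subsequence of peaks of the zigzag (the $q_i$ that are locally furthest from $p$), which still satisfies the no-$\ell$-collinear hypothesis and is in convex angular order around $p$, and recurse by re-running the empty-triangle machinery either from a new apex near one of the peaks, or by combining peaks with an intervening valley. The delicate point will be preserving emptiness when the pentagon uses non-consecutive members of $Q$, since a valley between two selected peaks may enter the convex hull of the five chosen points; I expect this is handled either by a careful peak selection that evacuates the relevant region, or by showing that any remaining bad configuration forces $\ell$ collinear points. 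The specific constant $328$ should emerge from propagating constant-factor losses through two or three layers of this visibility argument.
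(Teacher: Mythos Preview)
Your approach is genuinely different from the paper's, which works through convex layers and $4$-sectors rather than visibility from an extreme vertex. Unfortunately the visibility reduction loses too much information, and the fallback recursion cannot be completed.

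Concretely: after choosing the apex $p$ and passing to the first-visible set $Q$, your argument uses only (i) the angular order of $Q$ around $p$, (ii) the emptiness of the consecutive triangles $T_i=\conv\{p,q_i,q_{i+1}\}$, and (iii) the no-$\ell$-collinear hypothesis. But the $(\ell-1)\times(\ell-1)$ grid, viewed from a corner, already satisfies all three of these with $|Q|\sim\tfrac{6}{\pi^2}(\ell-1)^2\gg 328\ell$ for large $\ell$, and yet contains no empty pentagon. So the reduced statement ``$|Q|\geq 328\ell$ with empty consecutive fans implies an empty pentagon or $\ell$ collinear points'' is simply false. (In fact, for visible lattice points from the origin the Stern--Brocot/Farey mediant structure forces a ``valley'' between any two consecutive ``peaks'', so your primary search for four consecutive outward-convex $q_i$ never succeeds there; the zigzag is the generic picture, not the exceptional one.) This means the whole weight of the proof falls on the peak-recursion you describe only tentatively, and that recursion cannot preserve emptiness: once you skip a valley $q_{i+1}$ to pass from peak $q_i$ to peak $q_{i+2}$, the triangle $\conv\{p,q_i,q_{i+2}\}$ contains $q_{i+1}$, and there is no evident way to evacuate it. Changing to a new apex restarts the problem without visible progress.

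The paper's proof avoids this by never fixing a single apex. It first proves an independent structural lemma (Theorem~\ref{convposthm}): any $8\ell$ points of $P$ in weakly convex position already force an empty pentagon or $\ell$ collinear points. This bounds every convex layer by $8\ell$, and the main argument then threads ``chains'' between successive layers, showing that if there are too many layers the associated empty $4$-sectors would cover the outer layers with too few lines. The constant $328$ arises from this layer accounting, not from iterated visibility losses.
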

%\noindent 
This quadratic bound is optimal up to a constant factor since the $(\ell-1) \times (\ell-1)$ square grid has $(\ell-1)^2$ points and contains neither an empty pentagon nor $\ell$ collinear points.

Concerning the general question of the existence of empty $k$-gons in point sets with collinearities, Horton's negative result for empty heptagons also applies in this setting. However, it is not clear how to adapt the proofs of Nicol\'as and Gerken to deal with collinearities, and the case $k=6$ remains open.

The point set $P$ will be assumed to be finite throughout this paper, and indeed Theorem~\ref{mainthm} does not hold for infinite sets. A countably infinite point set in general position with no empty pentagons can be constructed recursively from any finite set in general position by repeatedly placing points inside every empty pentagon, avoiding collinearities. On the other hand, Theorem~\ref{mainthm} easily generalises to \emph{locally finite} point sets, point sets which contain only finitely many points in any bounded region. The result of Abel et al.~\cite{abeletal} already implies that an infinite locally finite set with no empty pentagon contains $\ell$ collinear points for every positive integer $\ell$.

The remainder of this section introduces terminology that is used throughout the paper.
The \emph{convex layers} $L_1, \dots, L_r$ of $P$ are defined recursively as follows: $L_i$ is the subset of $P$ lying in the boundary of the convex hull of $P \setminus \bigcup_{j=1}^{i-1} L_j$,
and $L_r$ is the innermost layer, so $P= \bigcup_{i=1}^r L_i$ and $L_i \neq \emptyset$ for $i=1,\dots, r$.
Note that each layer is in weakly convex position.

Points of $P$ will also be referred to as \emph{vertices} and line segments connecting two points of $P$ as \emph{edges}.
The \emph{edges of a layer} are the edges between consecutive points in the boundary of the convex hull of that layer.
Edges of layers will always be specified in clockwise order.
A single letter such as $e$ is often used to denote an edge.
For an edge $e$, let $l(e)$ denote the line containing $e$. 
Some edges will be used to determine half-planes.
The open half-planes determined by $l(e)$ will be denoted $e^+$ and $e^-$, where the $+$ and $-$ sides will be determined later.
Similarly, the closed half-planes determined by $l(e)$ will be denoted $e^\oplus$ and $e^\ominus$.

Gerken~\cite{gerken} introduced the notion of $k$-sectors.
If $p_1p_2p_3p_4$ is a strictly convex quadrilateral (that is, a strictly convex $4$-gon), then the \emph{$4$-sector} $S(p_1,p_2,p_3,p_4)$ is the set of all points $q$ such that $qp_1p_2p_3p_4$ is a strictly convex pentagon. 
Note that the order of the arguments is significant.
$S(p_1,p_2,p_3,p_4)$ is the intersection of three open half-planes, and may be bounded or unbounded, as shown in Figure~\ref{sector}.
The closure of a $4$-sector will be denoted by square brackets, $S[p_1,p_2,p_3,p_4]$.
If $P$ contains no empty pentagon and $p_1p_2p_3p_4$ is an empty quadrilateral in $P$, then $P \cap S(p_1,p_2,p_3,p_4) = \emptyset$. 
Otherwise, since $P$ is finite, there exists a point $x \in P \cap S(p_1,p_2,p_3,p_4)$ closest to the line $l(p_1p_4)$, and $xp_1p_2p_3p_4$ is an empty pentagon.

\begin{figure}
\bc \includegraphics%[width=.5\textwidth]
{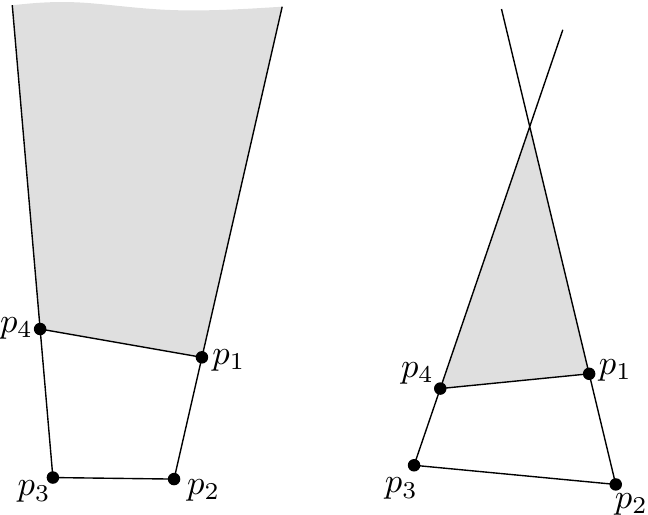} \ec
\caption{The shaded regions represent the $4$-sector $S(p_1,p_2,p_3,p_4)$, which may be bounded or unbounded.}
\label{sector}
\end{figure}

%%%%%%%%%%%%%%%%%%%%%%%%%%%%%%%%%%%%%%%%%%%%%%%%%%%%
%%%%%%%%%%%%%%%%%%%%%%%%%%%%%%%%%%%%%%%%%%%%%%%%%%%%

\section{Large subsets in weakly convex position}

The first major step in proving Theorem~\ref{mainthm} is to establish the following theorem concerning point sets with large subsets in weakly convex position. 

\begin{theorem}\label{convposthm}
If a point set $P$ contains $8\ell$ points in weakly convex position, then $P$ contains an empty pentagon or $\ell$ collinear points.
\end{theorem}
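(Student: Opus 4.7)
The plan is to argue by contradiction: suppose $P$ contains neither an empty pentagon nor $\ell$ collinear points, yet some subset $W \subseteq P$ of size $8\ell$ is in weakly convex position; the cases $\ell \le 2$ are trivial, so assume $\ell \ge 3$. Without loss of generality $W = P \cap \partial\conv(W)$, since adding to $W$ any $P$-point on the hull boundary preserves weak convexity. Let $q_1,\ldots,q_c$ be the corners of $W$ in clockwise order, and let $m_i \ge 0$ count the $W$-points in the relative interior of the hull edge $q_iq_{i+1}$. The $m_i+2$ collinear points on that edge belong to $P$, hence $m_i \le \ell - 3$, and summing yields $8\ell = c + \sum_i m_i \le c(\ell-2)$, so $c \ge 8\ell/(\ell-2) > 8$ and $c \ge 9$.

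The key geometric observation, following the $4$-sector remark from the introduction, is as follows. For any four consecutive corners $q_i,q_{i+1},q_{i+2},q_{i+3}$, the preceding corner $q_{i-1}$ lies in the open $4$-sector $S(q_i,q_{i+1},q_{i+2},q_{i+3})$, since $q_{i-1}q_iq_{i+1}q_{i+2}q_{i+3}$ is a strictly convex pentagon formed by five consecutive corners of $W$. If $q_iq_{i+1}q_{i+2}q_{i+3}$ were an empty quadrilateral in $P$, the $4$-sector observation would imply $q_{i-1} \notin P$, a contradiction. Hence every consecutive corner-quadrilateral is \emph{obstructed}: it contains some point of $P$ beyond its four vertices, of one of three types --- a non-corner $W$-point on one of the three hull edges $q_jq_{j+1}$ ($j=i,i+1,i+2$), a point of $P$ on the chord $q_iq_{i+3}$, or a point of $P$ strictly inside $\conv\{q_i,q_{i+1},q_{i+2},q_{i+3}\}$.

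The main technical step is to convert each obstruction into an empty pentagon. For an obstruction of interior or chord type, I would pick an extremal obstruction point $x$ (for example, the one closest to the line $l(q_iq_{i+3})$) and replace an appropriate corner by $x$ to form a strictly smaller quadrilateral that is empty in $P$; the $4$-sector of this smaller quadrilateral still contains an existing corner of $W$ such as $q_{i-1}$ or $q_{i+4}$, producing an empty pentagon by the same $4$-sector argument. For obstructions of boundary-edge type, each non-corner $W$-point on a single hull edge can block at most three consecutive corner-quadrilaterals; combining this with $m_i \le \ell-3$ and $\sum_i m_i = 8\ell - c$ in a cyclic count over the $c \ge 9$ corner-quadrilaterals shows that not every corner-quadrilateral can be obstructed solely in this way, so some interior or chord obstruction must exist, and the previous case applies.

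The principal obstacle is that last step: reducing a generic obstruction to a smaller \emph{empty} quadrilateral (not merely a smaller one), and verifying that its $4$-sector still captures an existing corner of $W$. One must argue that the extremal choice of $x$ really makes $\{q_i,q_{i+1},q_{i+2},x\}$ (or a symmetric variant) empty in all of $P$, not just in $W$, and that the geometry of its $4$-sector is sufficient to catch $q_{i-1}$ or $q_{i+4}$. Balancing these two obstruction types against the collinearity budget $m_i \le \ell-3$ is what calibrates the constant $8$ in the bound $8\ell$; the heart of the proof is a simultaneous accounting of both effects around the cycle of corners.
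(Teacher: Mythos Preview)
Your approach is fundamentally different from the paper's, and the ``principal obstacle'' you flag is a genuine gap---in fact there are two.

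First, the boundary-obstruction count does not go through. Take $\ell$ large enough that $c=9$ is consistent with $c(\ell-2)\ge 8\ell$ (any $\ell\ge 18$ will do). Then $\sum m_i = 8\ell-9$ is easily distributed among nine edges with every $m_i\ge 1$ and $m_i\le \ell-3$, so \emph{every} corner-quadrilateral $q_iq_{i+1}q_{i+2}q_{i+3}$ carries a non-corner $W$-point on one of its three hull edges. Your claimed reduction to an interior or chord obstruction simply never fires. In this situation the interior of $\conv(W)$ may even be empty of $P$-points, yet no five consecutive corners form an empty pentagon, because every hull edge is populated.

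Second, even when an interior obstruction $x$ exists, replacing a corner by $x$ does not preserve the $4$-sector property you need. If you form $q_ixq_{i+2}q_{i+3}$, the sector $S(q_i,x,q_{i+2},q_{i+3})$ is bounded by $l(q_ix)$ instead of $l(q_iq_{i+1})$; since $x$ lies inside the original quadrilateral this line is rotated inward, the sector strictly shrinks on that side, and there is no reason $q_{i-1}$ (or any other corner of $W$) remains in it. Nor does one replacement make the quadrilateral empty---there may be further interior points---and iterating only makes the sector-containment harder to control.

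The paper proceeds quite differently. It passes to an inclusion-\emph{minimal} weakly convex $8\ell$-gon $A\subseteq P$ and studies the second convex layer $B$ of $P\cap\conv(A)$. Minimality of $A$ forces $|A\cap b^+|>|B\cap l(b)|$ for every edge $b$ of $B$, and the absence of empty pentagons forces each $A\cap b^+$ to be collinear; from this one gets $|B|\ge |A|/2\ge 4\ell$. The heart of the proof is then to build a chain of empty quadrilaterals $Q_i=w_iv_iv_{i+1}w_{i+1}$ with $v_i\in A$ and $w_i\in B$; because these sit \emph{between} the two layers they are automatically empty, which is exactly the guarantee your approach cannot secure. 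Their closed $4$-sectors are shown to cover $B$, so every point of $B$ lies on one of fewer than $2\ell$ bounding lines $l(v_iw_i)$, with at most two points of $B$ on each. This forces $|B|<4\ell$, contradicting $|B|\ge 4\ell$. The use of the second layer is what produces empty quadrilaterals with controlled $4$-sectors---the missing ingredient in your outline.
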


A similar result has been obtained independently by Cibulka and Kyn\v{c}l~\cite{privcom}. Theorem~\ref{convposthm} immediately implies that every point set with $\ES(8\ell)$ points contains an empty pentagon or $\ell$ collinear points, which is already a substantial improvement on the result of Abel et al.~\cite{abeletal} mentioned above. The rest of this section is dedicated to proving it.

Throughout this section, let $P$ be a set of points in the plane that contains $\resultconst\ell$ points in weakly convex position but contains no $\ell$ collinear points. 
Suppose for the sake of contradiction that $P$ contains no empty pentagon.
Let $A$ be an inclusion-minimal weakly convex $8\ell$-gon in $P$. 
That is, there is no weakly convex $8\ell$-gon $A'$ such that $\conv(A') \subsetneq \conv(A)$.
An empty pentagon in $P \cap \conv(A)$ is an empty pentagon in $P$, so it can be assumed that $P \subseteq \conv(A)$, so $A$ is the first convex layer of $P$. 
% this uses minimality of A in a subtle way
Let $B$ be the second convex layer of $P$. % inside $A$.
For an edge $e$ of $A$ or $B$, let $e^+$ be the open half-plane determined by $l(e)$ that does not contain any point in $B$. 
%Then $e^+$ is the half-plane \emph{above} $e$ and $e^-$ is the half-plane \emph{below} $e$. NOT NEEDED

\begin{observation}\label{moreabove}
For each edge $b$ of $B$, $| A \cap b^+ | > |B \cap l(b)|$. 
Similarly, if $b_1,b_2,\dots, b_j$ are edges of $B$, then $$\left|A \cap \bigcup_{i=1}^j b_i^+ \right| > \left| B \cap \bigcup_{i=1}^j l(b_i) \right|.$$
\end{observation}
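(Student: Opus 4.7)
The plan is to derive a contradiction with the inclusion-minimality of $A$. I prove the more general statement directly, so that the single-edge case is recovered by taking $j=1$. Suppose for contradiction that $\bigl|A \cap \bigcup_{i=1}^j b_i^+\bigr| \le \bigl|B \cap \bigcup_{i=1}^j l(b_i)\bigr|$, and form the substituted set
\[
A' := \Bigl(A \setminus \bigcup_{i=1}^j b_i^+\Bigr) \cup \Bigl(B \cap \bigcup_{i=1}^j l(b_i)\Bigr).
\]
The assumed inequality gives $|A'| \geq |A| = 8\ell$. My strategy is to prove that $A'$ is weakly convex with $\conv(A') \subsetneq \conv(A)$: then any $8\ell$-point subset $A''$ of $A'$ is again weakly convex (weak convexity passes to subsets, because a supporting line of $\conv(A')$ at a point of $A''$ also supports $\conv(A'')$) and satisfies $\conv(A'') \subsetneq \conv(A)$, contradicting the choice of $A$.

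For weak convexity of $A'$, I would argue via supporting lines. Since every point of $A'$ lies in $b_i^\ominus$ for each $i$, one has $\conv(A') \subseteq \bigcap_i b_i^\ominus$, and since $B \subseteq \conv(A)$ we also have $\conv(A') \subseteq \conv(A)$. Given $p \in B \cap l(b_i)$, the line $l(b_i)$ supports $\conv(A')$ at $p$, because $\conv(A') \subseteq b_i^\ominus$ and $p \in l(b_i)$. For a retained vertex $p \in A \setminus \bigcup_i b_i^+$, the weak convexity of $A$ supplies a supporting line of $\conv(A)$ at $p$, which continues to support the smaller hull $\conv(A')$ at $p$. Hence every point of $A'$ lies on $\partial \conv(A')$.

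For the strict containment $\conv(A') \subsetneq \conv(A)$, I would note that each edge $b_i$ of $B$ lies in the interior of $\conv(A)$: its endpoints are in $B$, and no point of $B$ can lie on $\partial \conv(A)$, since $A$ is the outermost layer of $P \subseteq \conv(A)$. Thus $l(b_i)$ is not a supporting line of $\conv(A)$, so the cap $\conv(A) \cap b_i^+$ has nonempty interior and therefore contains at least one vertex of $\conv(A)$, i.e.\ a point of $A \cap b_i^+$. This point lies in $\conv(A)$ but not in $\conv(A') \subseteq b_i^\ominus$, giving proper containment. The only step that requires genuine care is the weak convexity of $A'$ in the multi-edge case, where $\partial \conv(A')$ is stitched together from several arcs of $A$ and several lines $l(b_i)$; the two-pronged supporting-line argument above handles both kinds of vertex uniformly, and everything else is routine once the substitution is set up.
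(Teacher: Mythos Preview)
Your proof is correct and follows the same substitution idea as the paper: remove $A \cap \bigcup_i b_i^+$, add $B \cap \bigcup_i l(b_i)$, and contradict the minimality of $A$. The paper states this in one sentence for the single-edge case and waves at the general case; you have simply supplied the supporting-line details for weak convexity and strict containment that the paper leaves implicit.
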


\begin{proof}
If $|A \cap b^+| \leq |B \cap l(b)|$ then removing the vertices $A \cap b^+$ from $A$ and replacing them by $B \cap l(b)$ gives a weakly convex $m$-gon $Q$ such that $m\geq |A|$ and $\conv(Q) \subsetneq \conv(A)$, contradicting the minimality of $A$; see Figure~\ref{fig:obsa}. 
The second claim follows from the minimality of $A$ in a similar way. %since $| B \cap \bigcup_{i=1}^j l(b_i) | \geq j+1$.
\end{proof}

\begin{figure}
\begin{subfigure}[t]{0.4\textwidth}
\centering
\includegraphics{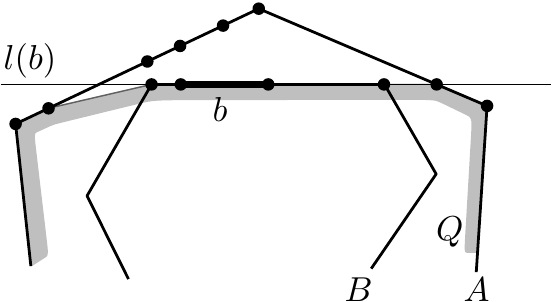} 
\caption{}%If $|A \cap b^+| \leq |B \cap l(b)|$, then $A$ is not minimal.}
\label{fig:obsa}
\end{subfigure}
\hspace{1cm} 
\begin{subfigure}[t]{0.4\textwidth}
\centering
\includegraphics{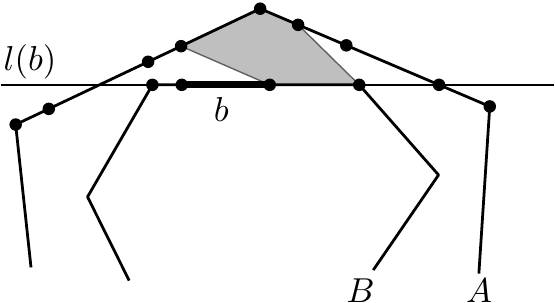} %\ec
\caption{}%If $b^+$ contained three non-collinear points of $A$, there would be an empty pentagon.}
\label{fig:obsb}
\end{subfigure}
\caption{(a) If $|A \cap b^+| \leq |B \cap l(b)|$, then $A$ is not minimal. (b) If $b^+$ contained three non-collinear points of $A$, there would be an empty pentagon.}
\end{figure}

\begin{observation}\label{collinear}
For each edge $b$ of $B$, the vertices of $A \cap b^+$ are collinear. 
%Also, $|B \cap l(b)| < \ell -1$.
\end{observation}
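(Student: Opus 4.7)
The plan is to prove the contrapositive: if $A \cap b^+$ is not collinear, then $P$ contains an empty pentagon, contradicting our standing assumption. Write $b = p_2 p_3$, and let $a_1, \dots, a_k$ denote the points of $A \cap b^+$ in the order they appear along the boundary of $\conv(A)$; these form a convex sub-chain of the upper boundary of $\conv(A)$. A quick induction shows that if every consecutive triple $a_i, a_{i+1}, a_{i+2}$ were collinear, then all $a_j$ would lie on the line through $a_1$ and $a_2$, so some $i$ yields a non-collinear triple $a_i, a_{i+1}, a_{i+2}$. The claim will be that $p_2 a_i a_{i+1} a_{i+2} p_3$ (with $p_2, p_3$ relabelled if necessary to agree with the pentagon's orientation; see Figure~\ref{fig:obsb}) is a strictly convex empty pentagon in $P$.

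For strict convexity, the five corner conditions split into three easy and two substantive. The turns at $p_2$ and $p_3$ are strict because $a_i, a_{i+2} \in b^+$ while $p_2, p_3 \in l(b)$, and the turn at $a_{i+1}$ is strict by the chosen non-collinearity. For the turns at $a_i$ and $a_{i+2}$: both $a_i a_{i+1}$ and $a_{i+1} a_{i+2}$ are edges of the weakly convex polygon $A$ lying on the upper boundary of $\conv(A)$, so $\conv(A)$ sits on the ``below'' side of the lines through them; since $p_2, p_3 \in B \subseteq \conv(A)$, the points $p_2, p_3$ lie strictly below these lines, which a slope comparison turns into the required right turns.

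Emptiness is where the real work happens. The pentagon lies in the closed half-plane $b^\oplus$. Any $B$-point other than $p_2, p_3$ lies in $b^-$ by definition of $b^+$, and any inner-layer point lies in $\inte \conv(B) \subseteq b^-$, so the pentagon contains none of these. An $A$-point in $b^\ominus$ lies outside $b^\oplus$ unless it sits on $l(b)$, but such an $A$-point cannot lie in the relative interior of segment $p_2 p_3 \subseteq \inte \conv(A)$ (since $A \subseteq \partial \conv(A)$), so it lies outside the pentagon as well. The remaining case is an $A$-point $a_m$ with $m \notin \{i, i+1, i+2\}$. For $m < i$ I will show that $a_m$ lies above the line $p_2 a_i$: the sequence $p_2, a_1, \dots, a_i$ forms a convex chain because $a_1, \dots, a_i$ is convex by definition and $p_2$ lies below the line through edge $a_1 a_2$ (by the argument of the previous paragraph applied at $a_1$), so its chord slopes from $p_2$ are non-increasing. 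Hence slope $p_2 a_m \geq$ slope $p_2 a_i$, placing $a_m$ above the line $p_2 a_i$ and outside the pentagon. The case $m > i + 2$ is symmetric via $p_3$ and the edge $a_{i+1} a_{i+2}$.

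The main obstacle I anticipate is handling the degenerate equalities in these slope comparisons, which correspond to collinear tuples of the form $p_2, a_m, a_{m+1}, \dots, a_i$ and would place $a_m$ on the boundary of the pentagon rather than strictly outside it. I plan to absorb such cases either by shifting the choice of consecutive non-collinear triple $a_i, a_{i+1}, a_{i+2}$, or by noting that a sufficiently long such collinear tuple would already contradict the section's standing assumption that $P$ contains no $\ell$ collinear points.
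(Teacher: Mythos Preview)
Your approach is exactly what the paper's two-sentence proof and Figure~\ref{fig:obsb} indicate: take three non-collinear consecutive points of $A\cap b^+$ together with the two endpoints of $b$ and observe that they form an empty strictly convex pentagon. The paper supplies no further detail, so your write-up is already more thorough than the original.

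Your anticipated obstacle, however, is a phantom. The degeneracy $a_m\in l(p_2a_i)$ (for $m<i$) cannot occur. Since $a_m,a_i\in b^+$, the segment $a_ma_i$ lies entirely in the open half-plane $b^+$ and therefore does not contain $p_2\in l(b)$; so if the three points were collinear, one of $a_m,a_i$ would lie strictly between $p_2$ and the other. But the half-open segment from $p_2\in\inte\conv(A)$ to any point of $\conv(A)$ lies in $\inte\conv(A)$, which would force a point of $A$ into the interior of $\conv(A)$, a contradiction. Hence your slope comparison is always strict, and every $a_m$ with $m\notin\{i,i+1,i+2\}$ lies strictly outside the pentagon; there is no need to shift the triple or invoke the bound on collinear points. (One small caution: the ``non-increasing chord slopes from $p_2$'' phrasing tacitly assumes $p_2$ sits to the left of the $a_j$'s; a cleaner way to phrase the separation is that the line $l(p_2a_i)$ meets $\partial\conv(A)$ at $a_i$ and at a second point in $b^-$, so it separates the boundary arc $\{a_1,\dots,a_{i-1}\}$ from the arc $\{a_{i+1},\dots,a_k\}$, hence from the pentagon.)
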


\begin{proof}
By Observation~\ref{moreabove}, there are at least 3 points in $A \cap b^+$. 
If $A \cap b^+$ is not collinear, then there is an empty pentagon; see Figure~\ref{fig:obsb}. 
%% Therefore $A \cap b^+$ is collinear, and so $|A\cap b^+| \leq \ell-1$. 
%% Hence %there are at most $\ell-2$ points in $B\cap l(b)$ 
%% $|B \cap l(b)| < \ell -1$
%% by Observation~\ref{moreabove}.
\end{proof}

The following lemma implies that $B$ has at least $4\ell$ vertices.

\begin{lemma}\label{double} $2|B| \geq |A|$.
\end{lemma}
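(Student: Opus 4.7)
The plan is to prove $|A|\le 2|B|$ by a double-counting argument on arc endpoints. Observations~\ref{moreabove} and~\ref{collinear} imply that for every edge $b$ of $B$ the set $A\cap b^+$ consists of at least three collinear points, which (since $A$ is weakly convex) necessarily lie on a single side of $A$ and form a contiguous arc on $\partial\conv(A)$ with two distinct endpoints in $A$. Summing over the $|B|$ edges of $B$ yields $2|B|$ (arc, endpoint) incidences, so it suffices to show that every vertex of $A$ is an arc endpoint for at least one arc.

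Every corner of $A$ is automatically an arc endpoint: an arc is collinear and is therefore confined to the supporting line of a single side, so it cannot cross over a corner onto an adjacent side, and the corner must be an endpoint of any arc containing it. It remains to handle the non-corners. A non-corner $a$ lies on a unique side $S$ of $A$, and every arc through $a$ also lies on $S$; such an arc is a prefix of $S$, a suffix of $S$, or all of $S$, depending on where the line $l(b)$ crosses the supporting line $l(S)$. The task is to exhibit an arc whose cut on $l(S)$ falls immediately next to $a$, so that $a$ is the extreme point of that arc.

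The main obstacle is ruling out a non-corner $a$ for which every arc on $S$ containing $a$ extends strictly past both of $a$'s neighbours on $S$. I would attack this by applying the sum version of Observation~\ref{moreabove} to the subset $E_S$ of edges of $B$ whose arcs lie on $S$, obtaining a tight inequality relating $|A\cap S|$ to the set of vertices of $B$ lying on the lines $\{l(b):b\in E_S\}$; the ``cut positions'' produced along $l(S)$ should then be forced to fall next to every non-corner of $S$. If the argument still allows an offending non-corner to survive, I expect a contradiction from the inclusion-minimality of $A$: one should be able to replace some vertices of $A$ on $S$ by appropriate collinear points of $B$ on $l(S)$ (or, more generally, by the vertices of $B$ on the relevant cut lines) to produce a weakly convex polygon of at least $|A|$ points whose convex hull is strictly inside $\conv(A)$. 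Overcoming this pathological case gives $|A|\le 2|B|$.
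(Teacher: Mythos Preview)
Your double-counting runs in the opposite direction from the paper's, and it has a real gap exactly where you flag it. The claim that every non-corner $a$ of $A$ is an endpoint of some arc $A\cap b^+$ is not forced by Observations~\ref{moreabove} and~\ref{collinear}: nothing prevents every line $l(b)$ from meeting $l(S)$ far from $a$, so that each arc through $a$ strictly overshoots both of $a$'s neighbours on $S$. Your proposed rescue via minimality cannot work as written. There are no points of $B$ on $l(S)$ at all, since $B$ lies in the interior of $\conv(A)$ and $l(S)$ is a supporting line of $\conv(A)$; so you cannot ``replace vertices of $A$ on $S$ by collinear points of $B$ on $l(S)$''. Replacing $a$ by a vertex $w\in B$ on one of the cut lines $l(b)$ fails for the same reason: $a$ is a non-corner, so $\conv(A\setminus\{a\})=\conv(A)$, and $w$ lies in its interior, destroying weak convexity of the new set. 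You would need a genuinely new idea here. (You also silently assume $B$ is a non-degenerate polygon with $|B|$ edges; the paper spends the first half of the proof establishing that $B$ is nonempty, not collinear, and in fact has at least nine corners.)

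The paper's argument avoids the endpoint issue entirely by counting the other way. Once $B$ has at least nine corners, pick $z\in P$ in the interior of $\conv(B)$. For every edge $xy$ of $A$ the closed triangle $\Delta[x,y,z]$ must contain a point of $B$: otherwise some edge $x'y'$ of $B$ crosses this triangle, $x'xyy'$ is an empty quadrilateral, and its $4$-sector $S(x',x,y,y')$ contains $z$, yielding an empty pentagon. Since all these triangles share the apex $z$, each point of $B$ lies in at most two of them, and $|A|\le 2|B|$ follows immediately.
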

\begin{proof}
%Lemma 6 by Abel et al.~\cite{abeletal} says that every set of $2\ell -1$ points in weakly convex position contains five points in strictly convex position or $\ell$ collinear points.
%Therefore the convex hull of any $2\ell -1$ consecutive points in $A$ contains a corner of $B$, otherwise there would be an empty pentagon.
%
Since $|A| \geq 8\ell$, $A$ has at least nine corners. %$B$ has at least four corners. 
Thus $B\neq \emptyset$. 
%If $B$ contains just one point then almost every line through that point has at least five corners of $A$ to one side.
%This would imply there was an empty pentagon in $P$, a contradiction.
%Thus $|B|\geq 2$.
%
If $B$ is collinear %(including if $|B|=1$) 
then let $h$ be the line containing $B$. 
There are at most two corners of $A$ on $h$, so there are at least four corners of $A$ strictly to one side of $h$.
The interior of the convex hull of these four corners together with any point in $B$ is empty.
This implies that there is an empty pentagon in $P$, a contradiction.
%by an argument similar to Observation~\ref{collinear}, the points of $A$ either side of the line containing $B$ are collinear. 
%Hence $|A| \leq 2\ell$, a contradiction.

Therefore $B$ has at least three corners, and at least three sides, where a \emph{side} of $B$ is the set of edges between consecutive corners.
Let $b_1, \dots, b_k$ be edges of $B$, one in each side of $B$.
By Observation~\ref{collinear}, each of the sets $A \cap b_i^+$ is collinear for $i = 1, \dots, k$.
Thus $|A|\leq \Sigma_{i=1}^k |A \cap b_i^+| < k\ell$, and so $k\geq 9$.
In other words, $B$ has at least nine corners,
so there is at least one point $z \in P$ in the interior of $\conv(B)$.
%Consider all line segments from $z$ to points of $A$.
Suppose that for some edge $xy$ of $A$ the closed triangle $\Delta[x,y,z]$ contains no point of $B$.
Then there is an edge $x'y'$ of $B$ that crosses this triangle.
The $4$-sector $S(x',x,y,y')$ contains $z$, contradicting the fact that $P$ contains no empty pentagon.
Thus every such closed triangle contains a point of $B$.
Since each point of $B$ is in at most two such closed triangles, $2|B| \geq |A|$.
%
%It remains to consider the case in which $B$ has just four corners.
%In this case $\conv(B)$ has four sides.
%Let $b_1, \dots, b_4$ be four edges of $B$, one from each side.
\end{proof}

The following lemma implies that for a set of points $X$, the first edge $b$ in $B$ in clockwise order such that $X \subseteq b^+$ is well defined, as long as there is at least one such edge.

\begin{lemma}\label{paths}
For any set of points $X\neq \emptyset$, let $E_X$ be the set of edges $b$ in $B$ such that $X\subseteq b^+$. Then the edges in $E_X$ are consecutive in $B$, and not every edge of $B$ is in $E_X$.
\end{lemma}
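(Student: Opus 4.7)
The plan is to first analyze $E_{\{x\}}$ for a single point $x$, and then obtain $E_X$ by intersecting over $x \in X$.

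If $x \in \conv(B)$, then $x$ lies in $b^\ominus$ for every edge $b$ of $B$, so $E_{\{x\}} = \emptyset$. Otherwise, I would parameterize the edges of $B$ by the direction $\theta$ of their outward unit normal; as $\theta$ increases, the edges of $B$ are encountered in cyclic order. Writing $h$ for the support function of $\conv(B)$, the condition $x \in b^+$ becomes $x \cdot (\cos \theta, \sin \theta) > h(\theta)$, and the set of $\theta$ satisfying this is an open arc of the circle $\mathbb{R}/2\pi\mathbb{Z}$ bounded by the two tangent directions from $x$ to $\conv(B)$. Because $\conv(B)$ is a bounded convex polygon of positive area (it has at least nine corners, as established in the proof of Lemma~\ref{double}) and $x$ is a finite point strictly outside $\conv(B)$, this arc has angular length strictly less than $\pi$. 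Consequently $E_{\{x\}}$ is a (possibly empty) set of consecutive edges of $B$ and does not contain all edges of $B$.

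Since $E_X = \bigcap_{x \in X} E_{\{x\}}$, the set $E_X$ is the intersection of cyclic arcs of edges, each arising from an open arc of normal directions of length less than $\pi$. The key combinatorial fact I would use is that the intersection of two arcs of a circle, each of angular length less than $\pi$, is again a single connected arc: otherwise the intersection would split into at least two components, which forces one of the two arcs to contain the entire complement of the other and hence to have length greater than $\pi$. Iterating this pairwise statement over $X$ shows that $E_X$ itself is a connected cyclic block of edges of $B$, proving the first assertion.

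For the second assertion, pick any $x \in X$; then $E_X \subseteq E_{\{x\}}$, and $E_{\{x\}}$ omits at least one edge of $B$ because its arc of normal directions has length less than $\pi$ while the outward normals of the edges of $B$ sweep out the full circle of length $2\pi$. The main point to justify with care is the strict inequality $<\pi$ for the arc of normal directions attached to a single exterior point $x$; once this is established, the rest reduces to a routine manipulation of arcs on a circle.
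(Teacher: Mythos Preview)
Your argument is correct and takes a genuinely different route from the paper's. The paper works directly with half-planes: to show that not every edge lies in $E_X$, it picks any $x\in X$ and a generic interior point $y$ of $\conv(B)$, so that the line $l(xy)$ crosses some edge $\tilde b$ with $x\in\tilde b^-$; to show consecutiveness, it takes two edges $b_1,b_2\in E_X$ and, after a short case analysis (same line, parallel lines, or crossing lines), verifies $b_1^+\cap b_2^+\subseteq b^+$ for every edge $b$ between $b_1$ and $b_2$ on the appropriate side. Your proof instead passes to the circle of outward normal directions and reduces both claims to the combinatorics of open arcs of length strictly less than~$\pi$: the single-point case yields one such arc, and the closure of this class under pairwise intersection (via the observation that two such arcs with disconnected intersection would have to cover the whole circle) handles the general case. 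Your approach is more conceptual and makes the role of ``$\conv(B)$ has nonempty interior'' explicit (this is exactly what forces the strict inequality $<\pi$), at the cost of a little more machinery; the paper's argument is more elementary and entirely self-contained.

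Two small points worth tightening in your write-up. First, the set $\{\theta : x\cdot(\cos\theta,\sin\theta)>h(\theta)\}$ being a \emph{single} arc deserves a one-line justification; the cleanest is to note that $x\cdot u_\theta-h(\theta)=\min_{v\in B}(x-v)\cdot u_\theta$ is a minimum of sinusoids, each positive on an open half-circle, and the intersection of open half-circles is connected. Second, the edge normals of $B$ form a finite set of directions rather than a continuous sweep, so the final step should say that the outward normals of a convex polygon with nonempty interior cannot all lie in an open semicircle (for instance because the edge vectors sum to zero); this then guarantees some edge normal lies outside your arc and hence $E_{\{x\}}$ omits an edge.
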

\begin{proof}
If $X \cap \conv(B)\neq \emptyset$ then $E_X = \emptyset$. 
Take a point $x\in X$, so $x \not\in \conv(B)$.
Let $y$ be a point in the interior of $\conv(B)$ that is not collinear with any two points of $B \cup\{x\}$.
%Consider the line $l(xy)$ where $y$ is in general position in the interior of $\conv(B)$. 
Then $l(xy)$ intersects precisely two edges $b$ and $\tilde{b}$ of $B$, with $x\in b^+$ and $x \in \tilde{b}^-$. 
Thus, $X\not\subseteq \tilde{b}^+$, so $E_X$ does not contain every edge of $B$.

If $E_X$ contains only one edge then the lemma holds, so consider two edges $b_1$ and $b_2$ in $E_X$ and suppose they are not consecutive.
If $l(b_1)=l(b_2)$, then clearly the edges between $b_1$ and $b_2$ on $l(b_1)$ are also in $E_X$.
Now suppose $l(b_1)\neq l(b_2)$.
If $l(b_1)$ and $l(b_2)$ are parallel, then $b_1^+ \cap b_2^+ = \emptyset$, a contradiction. 
So $l(b_1)$ and $l(b_2)$ cross at a point $p$. 
Without loss of generality, $p$ is above $B$ with $b_1$ on the left and $b_2$ on the right, as shown in Figure~\ref{fig:pathsa}.
%
%Removing $b_1$ and $b_2$ from the boundary of $B$ leaves two components. 
%One of these components separates $\conv(B)$ from $p$ in $b_1^\ominus \cap b_2^\ominus$. 
%Every edge $b$ in this component has $\{b_1, b_2\} \subseteq b^\ominus$ and $p \in b^\oplus$, and is thus in $E_X$. % Dave prefers 'path' language.
Let $b$ be the next edge clockwise from $b_1$. 
Then clearly $p \in b^\oplus$, so $b_1^+ \cap b_2^+ \subseteq b^+$, and hence $b \in E_X$.
Iterating this argument shows that every edge clockwise from $b_1$ until $b_2$ is in $E_X$.
It follows that the edges in $E_X$ are consecutive in $B$.
\end{proof}

%% Pavel's short proof of 3ptlemma doesn't need figure F02b
\begin{figure}
%\begin{subfigure}[t]{0.4\textwidth}
\centering
\includegraphics{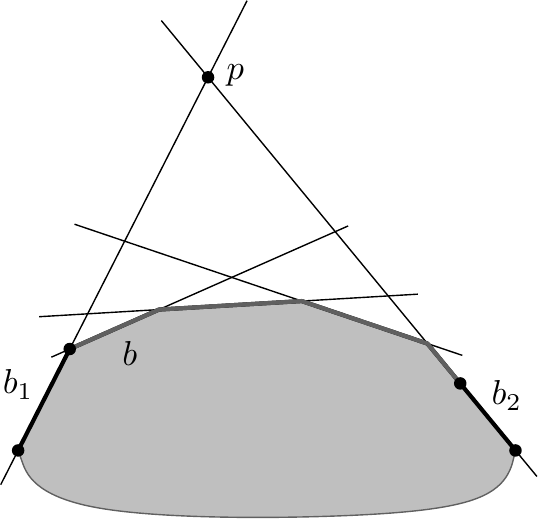}
%\caption{}
%\label{fig:pathsa}
%\end{subfigure}
%\hspace{1cm} 
%\begin{subfigure}[t]{0.4\textwidth}
%\centering
%\includegraphics{}
%\caption{}
%\label{fig:pathsb}
%\end{subfigure}
\caption{Lemma~\ref{paths}.}% (b) Finding the edge in Lemma~\ref{3ptlemma}.}
\label{fig:pathsa}
\end{figure}

Let $a$ be an edge of $A$ such that $|A \cap l(a)| \geq 3$. 
Such an edge exists by Observations~\ref{moreabove} and \ref{collinear}. 
%Let $k:=|A \cap l(a)|$, so $k < \ell$. 
Let $\{v_1,\dots,v_k\}$ be $A \cap l(a)$ in clockwise order. Thus $k < \ell$.

\begin{lemma}\label{3ptlemma}
There is an edge $b$ of $B$ such that $\{v_1,v_2,v_3\}\subseteq b^+$ or $\{ v_{k-2},v_{k-1},v_k \}\subseteq b^+$.
%Furthermore, the edges $b$ with $\{v_1,v_2,v_3\}\subseteq b^+$ are consecutive in $B$. 
%Similarly, the edges $b$ with $\{ v_{k-2},v_{k-1},v_k \}\subseteq b^+$ are consecutive in $B$.
\end{lemma}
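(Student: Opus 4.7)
The plan is a short proof by contradiction: assume no edge $b$ of $B$ satisfies $\{v_1,v_2,v_3\}\subseteq b^+$ or $\{v_{k-2},v_{k-1},v_k\}\subseteq b^+$, and derive a contradiction from the visibility of $v_2$ from outside $\conv(B)$.

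The key technical step is the following dichotomy: for each edge $b$ of $B$, either at most one $v_i$ lies in $b^+$, or at least three do. The reason is that $l(a)\cap b^+$ is empty, all of $l(a)$, or an open ray on $l(a)$, so the set $\{i : v_i\in b^+\}$ is automatically a prefix or a suffix of $\{1,\dots,k\}$; and if two $v_i$'s lie in $b^+$, they are collinear points of $A\cap b^+$ on $l(a)$, which by Observation~\ref{collinear} forces the line containing $A\cap b^+$ to be $l(a)$ itself, so $A\cap b^+\subseteq\{v_1,\dots,v_k\}$, and Observation~\ref{moreabove} then gives $|A\cap b^+|\ge 3$. Combining the dichotomy with the standing assumption (a prefix of size $\ge 3$ contains $\{v_1,v_2,v_3\}$, and a suffix of size $\ge 3$ contains $\{v_{k-2},v_{k-1},v_k\}$) rules out the second alternative, so that at most one $v_i$ lies in $b^+$ for every edge $b$ of $B$.

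To finish, since $\conv(B)\subseteq\inte(\conv(A))$ and $v_2\in\partial\conv(A)$, we have $v_2\notin\conv(B)$; any supporting line of $\conv(B)$ strictly separating $v_2$ from $\conv(B)$ contains an edge $b^*$ of $B$ with $v_2\in(b^*)^+$. But then $l(a)\cap(b^*)^+$ is an open ray, or all of $l(a)$, containing $v_2$; since $v_2$ lies strictly between $v_1$ and $v_3$ on $l(a)$, this set must also contain $v_1$ or $v_3$, which puts two $v_i$'s in $(b^*)^+$ and contradicts the previous paragraph.

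The main obstacle is proving the dichotomy: that is where the minimality of $A$ enters, through Observations~\ref{moreabove} and \ref{collinear}. The final visibility step for $v_2$ is routine.
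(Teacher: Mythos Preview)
Your proof is correct and takes essentially the same approach as the paper's: pick an edge $b$ of $B$ with $v_2\in b^+$, use Observations~\ref{moreabove} and~\ref{collinear} to see that $A\cap b^+$ is a collinear set of at least three points lying on $l(a)$, and then exploit that $l(a)\cap b^+$ is a half-line through $v_2$ (hence also through $v_1$ or $v_3$) to obtain one of the two desired triples. The paper runs this directly rather than by contradiction, and one phrasing quibble: a supporting line of $\conv(B)$ separates only weakly, not strictly---but the intended conclusion, that some edge $b^*$ satisfies $v_2\in(b^*)^+$, is correct and is exactly how the paper begins.
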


%%%% Pavel's version %%%%
\begin{proof}
Let $b$ be an edge of $B$ with $v_2\in b^+$. Such an edge exists, since otherwise $v_2\in\conv(B)$.
Observations~\ref{moreabove} and \ref{collinear} imply that $|A \cap b^+|\geq 3$ and $A \cap b^+$ is collinear. 
Thus if $v_1 \in b^+$, then $\{v_1,v_2,v_3\} \subseteq b^+$, as required. 
Otherwise $l(b)$ intersects $l(a)$ between $v_1$ and $v_2$, %as shown in Figure~\ref{fig:pathsb},
so $\{v_2,v_3,\dots,v_k\} \subseteq b^+$ and $k\geq 4$, because if $k=3$ then $|A \cap b^+| =2$. 
%This completes the proof.
\end{proof}

By Lemma~\ref{3ptlemma}, without loss of generality, there is an edge $b$ of $B$ such that 
$\{v_1,v_2,v_3\} \subseteq b^+$, and by Lemma~\ref{paths} the edges with this property are consecutive in $B$. 
Let $b_1$ be the first one in clockwise order. 
For an illustration of the following definitions, see Figure~\ref{fig:sectorsAndRays}. 
First observe that $|A \cap l(a) \cap \tilde{b}^+| \geq 3$ cannot
hold for every edge $\tilde{b}$ of $B$, because otherwise $A \cap l(a)
= A$ by Observation~\ref{collinear}, and so $|A|<\ell$.
Define the endpoints of $b_1$ to be $w_1$ and $w_2$ in clockwise order.
Let $w_3,\dots,w_{m+1}$ and $b_i:=w_iw_{i+1}$ be subsequent vertices and edges of $B$ in clockwise order, %where $b_{m-1}^+$ contains $v_k$, but $b_m^+$ does not. 
where $|A \cap l(a) \cap b_{m-1}^+|\geq 3$ but $|A \cap l(a) \cap b_{m}^+|\leq 1$. 
Then $m \leq |B \cap \bigcup_{i=1}^{m-1} l(b_i)|<|A  \cap \bigcup_{i=1}^{m-1} b_{i}^+|\leq k$ by Observation~\ref{moreabove}. 
%However, if the first inequality is tight, then $b_1,\dots,b_{m-1}$ are all the edges of $B$, which cannot be since $|B|\geq 4\ell$ by Lemma~\ref{double}.
%Thus $m<k$.
%
Now define $e_i:=v_iw_i$ for $i=1,\dots,m$. 
Let $e_i^-$ be the open half-plane determined by $l(e_i)$ that contains $v_1$, or that does not contain $v_2$ in the case of $e_1$.

\begin{figure}
%\begin{subfigure}[t]{0.4\textwidth}
\centering
\includegraphics%[width=\textwidth]
{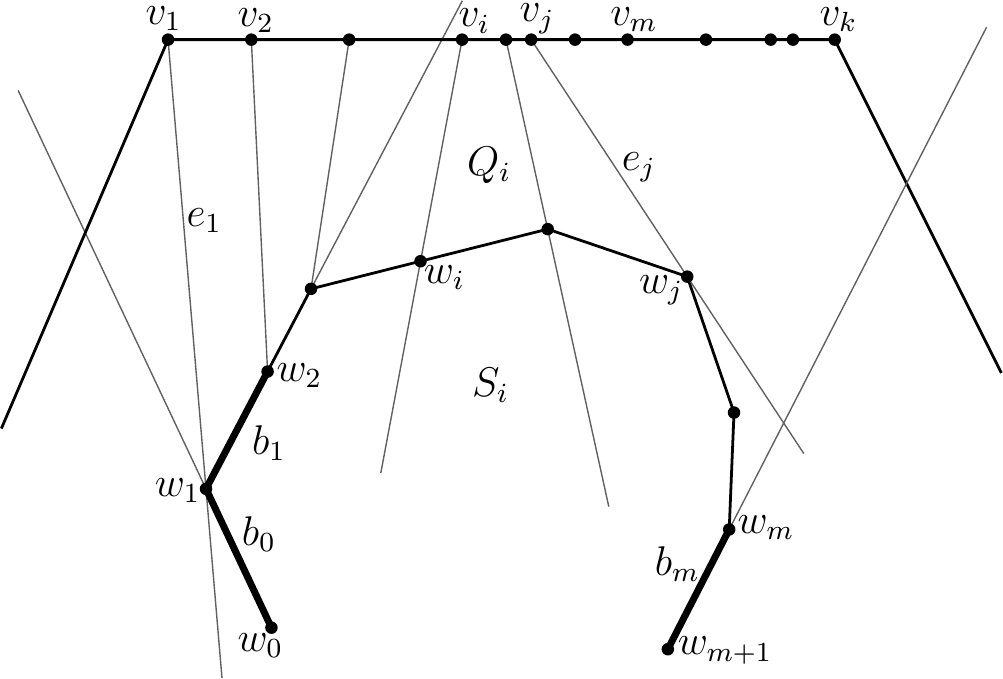} 
%\caption{The case that $v_1$ is not in $b_0^+$.}
%\label{fig:sectorsAndRaysa}
%\end{subfigure}
%\hspace{1cm} 
%\begin{subfigure}[t]{0.4\textwidth}
%\centering
%\includegraphics[width=\textwidth]{}
%\caption{The case that $v_1$ is in $b_0^+$.}
%\label{fig:sectorsAndRaysb}
%\end{subfigure}
\caption{Definition of $b_1$ and the quadrilaterals $Q_i$.}
\label{fig:sectorsAndRays}
\end{figure}

Let $j$ be minimal such that the closed half-plane $e_j^\ominus$ contains $B$. 
Clearly $j\neq1$ since $w_2 \in e_1^+$.
The following argument shows that $j$ is well-defined.
%
%If $B \subseteq e_m^\ominus$, then $j$ is well-defined, so assume that $e_m^\ominus$ does not contain $B$. 
%
Call $e_i$ \emph{good} if $w_i$ is the closest point of $l(e_i) \cap \conv(B)$ to $v_i$. 
%
%Since $v_k$, and thus $v_m$, do not lie in $b_m^+$, the assumption that $B \not\subseteq e_m^\ominus$ implies that $e_m$ is not good, as shown in Figure~\ref{fig:intermediateVertexa}. % Improve argument??
%
First suppose that $e_m$ is good, so in particular $v_m \in b_{m-1}^+$. 
Since $m$ was chosen so that $|A \cap l(a) \cap b_{m-1}^+|\geq 3$ but $|A \cap l(a) \cap b_{m}^+|\leq 1$, and since $m<k$, it follows that $v_m\in b_{m}^\ominus$ also.
This implies that $B\subseteq e_m^\ominus$, as illustrated in Figure~\ref{fig:intermediateVertexa}, and so $j$ is well-defined. 
Now suppose that $e_m$ is not good.
By the choice of $b_1$, both $e_1$ and $e_2$ are good, so let $p$ be minimal such that $e_p$ is not good. Thus $3 \leq p \leq m$. 
%Then $w_{p-1}$ is in $e_p^\oplus$, $v_{p-1}$ is in $e_p^-$, and by assumption $e_{p-1}$ is good, as shown in Figure~\ref{fig:intermediateVertexb}.
Then $w_{p-2}$ is in $e_{p-1}^-$ because $e_{p-1}$ is good, and $w_p$ is in $e_{p-1}^-$ because $e_p$ is not good, as shown in Figure~\ref{fig:intermediateVertexb}.
This implies that $B \subseteq e_{p-1}^\ominus$, % and $j \leq p-1$, 
so $j$ is well-defined. 
Note that this also shows that $e_i$ is good for all $i=1,\dots,j$.

%\begin{figure}

%\bc \includegraphics[width=.4\textwidth]{pics/F031} \hspace{1cm} \includegraphics[width=.4\textwidth]{pics/F03} \ec
%\caption{(a) $B \not\subseteq e_m^\ominus$ implies that $e_m$ is not good. (b) If $e_{p-1}$ is good and $e_p$ is not, then $B\subseteq e_{p-1}^\ominus$ because otherwise there is an intermediate vertex between $e_{p-1}$ and $e_{p}$.}
%\label{fig:intermediateVertex}
%\end{figure}

\begin{figure}
\begin{subfigure}[t]{0.4\textwidth}
\centering
\includegraphics{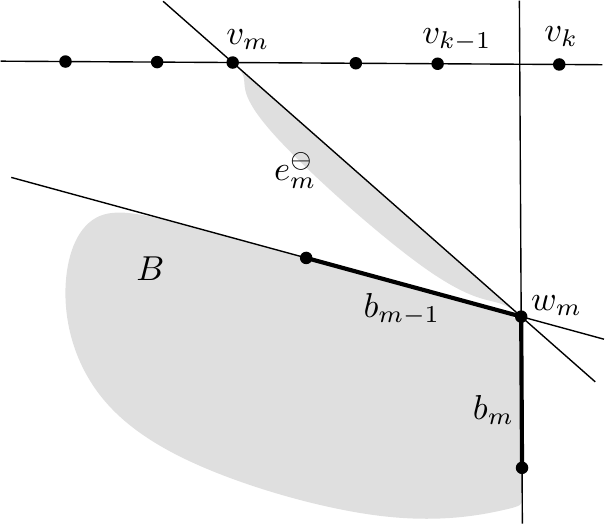} 
\caption{}%$B \not\subseteq e_m^\ominus$ implies that $e_m$ is not good.}
\label{fig:intermediateVertexa}
\end{subfigure}
\hspace{1cm} 
\begin{subfigure}[t]{0.4\textwidth}
\centering
\includegraphics{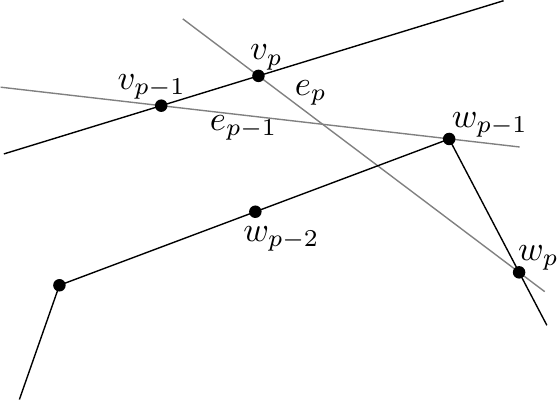}
\caption{}%If $e_{p-1}$ is good and $e_p$ is not, then $B\subseteq e_{p-1}^\ominus$.}
\label{fig:intermediateVertexb}
\end{subfigure}
\caption{(a) If $e_m$ is good then $B \subseteq e_m^\ominus$. (b) If $e_{p-1}$ is good and $e_p$ is not, then $B\subseteq e_{p-1}^\ominus$.}
\end{figure}

Define the quadrilaterals $Q_i:=w_i v_i v_{i+1} w_{i+1}$ for $i=1,\dots,j-1$.
By the following argument, the quadrilaterals $Q_i$ are strictly convex. 
Suppose on the contrary that $Q_h$ is not strictly convex, and $h$ is minimal. 
There are two possible order types for $Q_h$.
The first possibility is that $v_h \in b_h^\ominus$ and so $B \subseteq e_h^\ominus$ (since $e_h$ is good), contradicting the minimality of $j$; 
see Figure \ref{fig:nonConvexQuadsa}.
The second possibility is that $v_{h+1} \in b_h^\ominus$ and so $A \cap \bigcup_{i=1}^{h} b_i^+ = \{v_1, \dots, v_h \}$, which contradicts Observation~\ref{moreabove} since $|B \cap \bigcup_{i=1}^h l(b_i)| \geq h+1 $; see Figure~\ref{fig:nonConvexQuadsb}.

%\begin{figure}
%\bc \includegraphics[width=.4\textwidth]{pics/F06a} \hspace{1cm} \includegraphics[width=.4\textwidth]{pics/F07a}  \ec
%\caption{Definition of $b_1$ and the quadrilaterals $Q_i$. (a) shows the case that $v_1$ is not in $b_0^+$ and (b) shows the case that $v_1$ is in $b_0^+$.}
%\label{fig:sectorsAndRays}
%\end{figure}

%\begin{figure}
%\bc \includegraphics[width=.4\textwidth]{pics/F04a} \hspace{1cm} \includegraphics[width=.4\textwidth]{pics/F05a} \ec
%\caption{The quadrilaterals $Q_i$ are strictly convex because neither the case (a) nor the case (b) can occur.}
%\label{fig:nonConvexQuads}
%\end{figure}

\begin{figure}
\begin{subfigure}[t]{0.4\textwidth}
\centering
\includegraphics{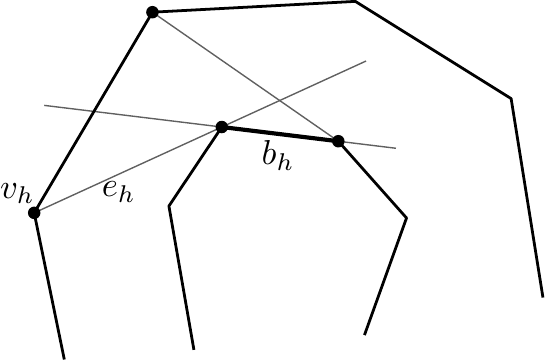} 
\caption{}%If $v_h \in b_h^\ominus$ then $B \subseteq e_h^\ominus$.}
\label{fig:nonConvexQuadsa}
\end{subfigure}
\hspace{1cm} 
\begin{subfigure}[t]{0.4\textwidth}
\centering
\includegraphics{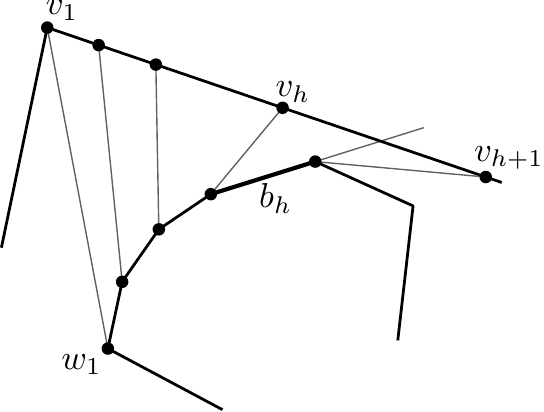}
\caption{}%If $v_{h+1} \in b_h^\ominus$ then $A \cap \bigcup_{i=1}^{h} b_i^+ = \{v_1, \dots, v_h \}$.}
\label{fig:nonConvexQuadsb}
\end{subfigure}
\caption{(a) If $v_h \in b_h^\ominus$ then $B \subseteq e_h^\ominus$. (b) If $v_{h+1} \in b_h^\ominus$ then $A \cap \bigcup_{i=1}^{h} b_i^+ = \{v_1, \dots, v_h \}$.}%The quadrilaterals $Q_i$ are strictly convex because neither the case (a) nor the case (b) can occur.}
\end{figure}

Let $S_i:=S[w_i,v_i,v_{i+1},w_{i+1}]$ be the closed 4-sector of the quadrilateral $Q_i$ for $i=1,\dots,j-1$.
Note that $B \cap S_i = B \cap e_i^\oplus \cap e_{i+1}^\ominus$. 
Take a point $x \in B \cap e_1^\oplus$.
Then  $x \in e_j^\ominus$ since $B \subseteq e_j^\ominus$.
Let $h$ be minimal such that $x \in e_{h+1}^\ominus$.
If $h = 0$ then $x \in l(e_1)\cap B \subseteq S_1$.
Otherwise $x \not\in e_h^\ominus$, so $x \in e_h^\oplus$, and so $x \in S_h$.
Hence $B \cap e_1^\oplus \subseteq \bigcup_{i=1}^{j-1} S_i$.

The quadrilaterals $Q_i$ are empty because they lie between the layers $A$ and $B$. 
%If one of the $4$-sectors $S_i$ contains a point of $P$, then we find an empty pentagon $x w_i v_i v_{i+1} w_{i+1}$, where $x$ is the closest point in $S_i$ to $w_iw_{i+1}$. 
%
Therefore no $S_i$ contains a point of $B$ in its interior, %(otherwise $P$ would contain an empty pentagon), 
and so all the points of $B \cap e_1^\oplus$ lie on the lines $l(e_1), \dots, l(e_j)$. 
Since $B$ is in weakly convex position, $|B \cap l(e_i)| \leq 2$ for $i=2, \dots, j-1$.
There can be at most $\ell -2$ points of $B$ on $l(e_1)$ and $l(e_j)$. 
In fact there are less points of $B$ on $l(e_1)$ and $l(e_j)$, as the following argument shows.
Note that $w_m$ is a corner of $B$ since $A \cap b_{m-1}^+ \neq A \cap b_{m}^+$.
%$b_{m-1}^+$ contains $v_{k-1}$, but $b_m^+$ does not. 
% contain $v_k$. 
Therefore $B \cap l(e_j) \subseteq \{w_j,\dots,w_m\}$, and so $|B \cap l(e_j)| \leq m-j+1$. 
Since $j, m < \ell$, adding up the bounds for each $l(e_i)$ yields $|B \cap e_1^\oplus| \leq (\ell -2) + 2(j-2) + (m-j+1) < 3\ell.$ 
%% Gwen said this uses obs6, but this is not the case. 
Since $|B| \geq 4\ell$ by Lemma~\ref{double}, this implies that $B \not\subseteq e_1^\oplus$, which implies that $|B \cap l(e_1)| \leq 2$.
%If $|B \cap l(e_1)| \geq 3$, then $B \subseteq e_1^\oplus$, so actually $|B \cap l(e_1)| \leq 2$.
Hence $|B \cap e_1^\oplus| \leq 2(j-1) + (m-j+1) < 2\ell$.

It remains to bound the size of the rest of $B$, that is, $|B \cap e_1^-|$.
Define $v_0, v_{-1}, v_{-2}, \dots$ and $w_0, w_{-1}, w_{-2}, \dots$ to be the vertices of $A$ and $B$ proceeding anticlockwise from $v_1$ and $w_1$ respectively. 
Define $b_0:=w_0w_1$.
Since $B \not\subseteq e_1^\oplus$, it follows that $v_1 \in b_0^+$, as shown in Figure~\ref{fig:coveringB}. 
%
%If $b_0^+$ does not contain $v_1$, then $B \subseteq e_1^\oplus$, as shown in Figure~\ref{fig:sectorsAndRays}, and so $|B| \leq 2\ell-1$, which is a contradiction.
%Therefore assume that $v_1 \in b_0^+$ and $B \cap e_1^- \neq \emptyset$, as shown in Figure~\ref{fig:coveringB}. 
%
Since $b_1$ is the first edge in clockwise order with $\{v_1, v_2,v_3\} \subseteq b_1^+$, neither $v_2$ nor $v_3$ is in $b_0^+$.
Hence by Observation~\ref{moreabove}, $\{v_1,v_0,v_{-1}\}\subseteq b_0^+$.
Also, by Observation~\ref{collinear}, neither $v_0$ nor $v_{-1}$ is in $b_1^+$, so $b_0$ is the first edge of $B$ with $\{v_1,v_0,v_{-1}\}\subseteq b_0^+$ in anticlockwise order 
(recall that edges with this property are consecutive in $B$ by Lemma~\ref{3ptlemma}).
Therefore, the argument that started at $b_1$ and proceeded clockwise may be started at $b_0$ and proceed anticlockwise instead.
In this situation, the edge $e_1$ will remain the same as before because the starting points $v_1$ and $w_1$ are unchanged.
Thus the argument will cover $B \cap e_1^\ominus$ with 4-sectors and, analogously to before, show that $|B \cap e_1^\ominus| < 2\ell$.
This implies that $|B| \leq |B \cap e_1^\ominus| + |B \cap e_1^\oplus| < 4\ell$, which contradicts the fact that $|B| \geq 4\ell$.
This completes the proof of Theorem~\ref{convposthm}. \qed

\begin{figure}%[h]
\bc \includegraphics%[width=.5\textwidth]
{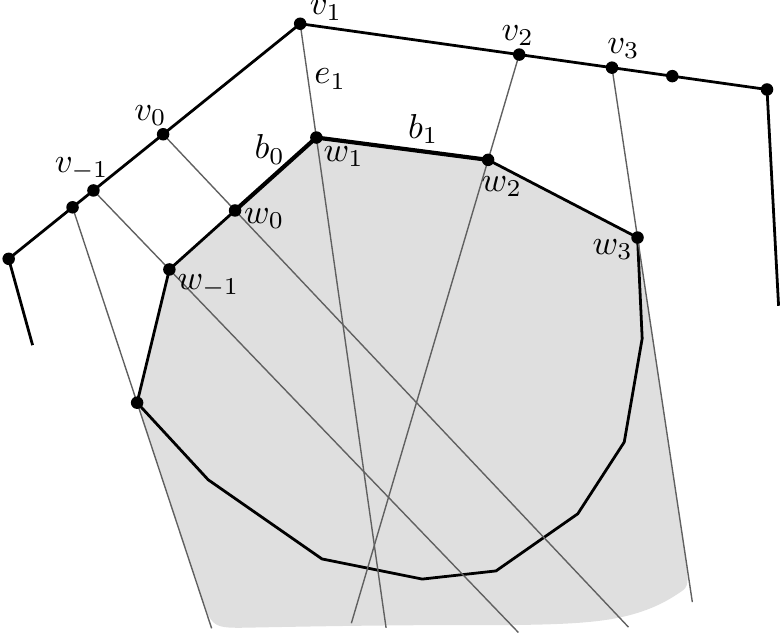} \ec
\caption{The convex hull of $B$ is covered by the union of the closed sectors $S_i$.}
\label{fig:coveringB}
\end{figure}

%%%%%%%%%%%%%%%%%%%%%%%%%%%%%%%%%%%%%%%%%%%%%%%%%%%%
%%%%%%%%%%%%%%%%%%%%%%%%%%%%%%%%%%%%%%%%%%%%%%%%%%%%
%\clearpage

\section{Proof of Theorem~\ref{mainthm}}

Let $P$ be a set of at least $328\ell^2$ points with no $\ell$ collinear points, and suppose for the sake of contradiction that $P$ does not contain an empty pentagon.
Let $L_1, \dots, L_r$ be the convex layers of $P$, with $L_1$ the outermost and $L_r$ the innermost layer.
Theorem~\ref{convposthm} implies that $|L_i| < 8\ell$ for every $i$.
The layers are divided into three groups as follows.
The layers $L_{r-\ell +1}$ to $L_r$ are the \emph{inner layers}. Hence $\left| L_{r-\ell+1} \cup \dots \cup L_r \right| < 8\ell^2$.
The layers $L_1$ to $L_a$ are the \emph{outer layers}, 
where $a$ is the minimum integer such that $\left| L_1 \cup \dots \cup L_a \right| \geq 64\ell(\ell-1)$.
This means that $\left| L_1 \cup \dots \cup L_a \right| \leq 64\ell(\ell-1) + 8\ell < 64\ell^2$.
The remaining layers $L_{a+1}$ to $L_{r-\ell}$ are the \emph{middle layers}.
%$L_1$ to $L_a$ are called the \emph{outer layers}, $L_{a+1}$ to $L_{r-\ell}$ the \emph{middle layers}, and $L_{r-\ell +1}$ to $L_r$ the \emph{inner layers}.

The strategy of the proof is to analyse the structure of the middle layers and show that if there are too many middle layers, then the outer layers contain less points than the lower bound in the previous paragraph.
This contradiction implies that there are not too many middle layers.
Since the size of each layer is limited by Theorem~\ref{convposthm}, this yields an upper bound on the number of points in the middle layers.
Adding this upper bound to those just established for the inner and outer layers will give a contradiction to the assumed size of $P$, completing the proof.

%%%%%%%%%%%%
Abel et al.~\cite{abeletal} introduced the following definition and lemma.
%Let $z$ be a point of the innermost layer of $P$.
Fix a point $z \in L_r$.
An edge $xy$ of $L_i$ is \emph{empty} if the open triangle $\Delta(x,y,z)$ contains no points of $L_{i+1}$. %\cap L_{i+1} = \emptyset$.
%Although it is not stated in this form, the paper by Abel et al.~\cite{abeletal} that sufficiently large points sets contain an empty pentagon or $\ell$ collinear points contains a proof of the following lemma.
%
\begin{lemma}\label{empty}\emph{\cite{abeletal}}
%Let $L_1, \dots, L_\ell$ be consecutive convex layers of a point set $P$. 
If $L_i$ contains an empty edge for some $i \in \{1, \dots, r -\ell +1 \}$, then $P$ contains an empty pentagon or $\ell$ collinear points.
\end{lemma}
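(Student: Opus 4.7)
The plan is to extract an empty convex quadrilateral from the empty edge $xy$ and then apply the 4-sector observation from the introduction, with $z$ playing the role of the fifth vertex.

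Since the open triangle $\Delta(x,y,z)$ contains no vertex of $L_{i+1}$ while $z \in L_r \subseteq \conv(L_{i+1})$, the boundary of $\conv(L_{i+1})$ must cross the triangle, and by convexity it does so along a single edge $x_1 y_1$ of $L_{i+1}$, where $x_1$ lies outside segment $xz$ and $y_1$ outside segment $yz$. I would then consider the quadrilateral $Q$ with vertices $x, x_1, y_1, y$ in this cyclic order, and verify that $Q$ is empty of $P$ by a layer-by-layer count: points of $L_1, \dots, L_{i-1}$ lie outside $\conv(L_i) \supseteq Q$; points of $L_i$ sit on the boundary of $\conv(L_i)$, which meets $\conv(Q)$ only along the edge $xy$ (and $xy$ has no other $P$-points, as those would lie in $L_i$); the empty edge hypothesis excludes vertices of $L_{i+1}$ from the open triangle, while the remainder of $Q$ lies in the closed half-plane of $l(x_1 y_1)$ opposite $\conv(L_{i+1})$; and points of $L_{i+2}, \dots, L_r$ lie strictly inside $\conv(L_{i+1})$, on the opposite side of $l(x_1 y_1)$ from $Q$.

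Assuming $Q$ is strictly convex, the 4-sector observation yields the empty pentagon. For if $P$ contained no empty pentagon, then $S(x_1, x, y, y_1)$ would be disjoint from $P$; however, $z \in P$ lies in this 4-sector because $z$ is on the opposite side of $l(x_1 y_1)$ from $Q$ and sits within the cone delimited by the rays extending $x x_1$ past $x_1$ and $y y_1$ past $y_1$, making $z x_1 x y y_1$ a strictly convex pentagon. This contradiction produces the required empty pentagon.

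The main obstacle is the degenerate case where $Q$ is not strictly convex. Since $x_1, y_1$ do not lie on $l(xy)$ (because $L_{i+1}$ sits strictly inside $\conv(L_i)$), the only possible degeneracy is $x \in l(x_1 y_1)$ or $y \in l(x_1 y_1)$. In that case, $l(x_1 y_1)$ already contains an $L_i$-vertex together with all vertices of $L_{i+1}$ lying on the corresponding side of $\conv(L_{i+1})$. I would then iterate the same construction in deeper layers; the hypothesis $i \leq r - \ell + 1$ guarantees that at least $\ell$ layers are available, and either some iteration triggers the non-degenerate case (yielding an empty pentagon), or a new collinear vertex of the next layer is appended to the common line at each step, yielding $\ell$ collinear points of $P$ after at most $\ell - 1$ iterations.
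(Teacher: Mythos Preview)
Your argument has a genuine gap in the ``non-degenerate'' step. You claim that whenever $Q = xx_1y_1y$ is strictly convex, the point $z$ lies in the open $4$-sector $S(x_1,x,y,y_1)$. This is false. The open triangle $\Delta(x,y,z)$ contains no vertex of $L_{i+1}$, but its \emph{closed} sides may: the endpoint $x_1$ can lie on the segment $xz$ (and symmetrically $y_1$ on $yz$). In that situation $l(xx_1)=l(xz)$, so $z$ sits on the boundary of the cone you describe, not in its interior, and hence $z\notin S(x_1,x,y,y_1)$. Meanwhile $Q$ is typically still strictly convex, so your case split does not detect this. Thus the empty-pentagon conclusion fails exactly in the ``aligned'' configurations $x_1\in l(xz)$ or $y_1\in l(yz)$, and your argument never addresses them.

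Your proposed degeneracy, $x\in l(x_1y_1)$ or $y\in l(x_1y_1)$, is a different (and rather exotic) condition; even granting it, your iteration sketch tracks collinearities along $l(x_1y_1)$, which is the wrong line. The actual obstruction accumulates collinear points along $l(xz)$ and $l(yz)$, and the two sides can alternate, so a naive ``append one collinear point per layer'' argument does not work. The paper handles this by classifying each follower edge as double-, left-, or right-aligned, iterating the follower construction through the layers, and showing that at the first transition from left-aligned to not-left-aligned one obtains an explicit empty pentagon from five vertices spread over three consecutive layers; if no such transition occurs, the alignments force $\ell$ collinear points on one of the two lines through $z$. You would need an argument of this shape to close the gap.
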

%
%\noindent 
Lemma~\ref{empty} is not stated in this form in the paper by Abel et al.~\cite{abeletal}, so the proof %of Lemma~\ref{empty}
 is included in Appendix~\ref{append} for completeness.
%%%%%%%%%%%%

For now, consider only the points in the middle layers $L_{a+1}$ to $L_{r-\ell}$. % along with a fixed point $z \in L_r$.
%By Lemma~\ref{empty}, no edge in a middle layer is empty.
For each point $v$ in a middle layer $L_i$, define the left and right child of $v$ as follows (see Figure~\ref{fig:childrena}).
Let $x$ be the closest point to $v$ in $\conv(L_{i+1}) \cap vz$ (where $vz$ is the line segment from $v$ to $z$).
The \emph{right child} of $v$ is the point in $L_{i+1}$ immediately clockwise from $x$.
The \emph{left child} of $v$ is the point in $L_{i+1}$ immediately anticlockwise from $x$.
Note that although $x$ may be in $P$, $x$ is neither the left nor the right child of $v$.

A \emph{right chain} is a sequence $v_1, \dots, v_t$ of points in $L_{a+1} \cup \dots \cup L_{r-\ell}$ such that $v_{i+1}$ is the right child of $v_i$.
A \emph{left chain} is defined in a similar fashion.
A \emph{subchain} is a chain contained in a larger chain, and a \emph{maximal} chain is one that is not a proper subchain of another chain.
A point cannot be the right child of two points $u$ and $v$ in $L_i$, otherwise the edge $uv$ (or the edges in the segment $uv$ if $u$ and $v$ are not adjacent) would be empty, contradicting Lemma~\ref{empty}. 
Similarly, a point cannot be the left child of two points.
This implies that maximal right chains do not intersect one another, and similarly for maximal left chains.
Furthermore, by construction each point in the middle layers has a left and a right child, so every maximal chain contains a point in $L_{r-\ell}$. 
Together these observations imply the following lemma.

\begin{lemma}\label{chains} 
Every point in the middle layers %$L_{a+1} \cup \dots \cup L_{r-\ell}$
is in precisely one maximal right chain and one maximal left chain.
The number of maximal right chains is $|L_{r-\ell}| \leq 8\ell -1$, and similarly for maximal left chains. \qed
\end{lemma}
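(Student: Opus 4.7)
The plan is that nearly all the work has already been done in the paragraph preceding the statement, so the proof is a bookkeeping exercise assembling three facts. First I would observe that by the definition given, every point $v$ in a middle layer $L_i$ (with $a+1 \le i \le r-\ell$) has a uniquely determined right child in $L_{i+1}$, because the layer $L_{i+1}$ is cyclically ordered and ``immediately clockwise from $x$'' pins down a single vertex. Iterating the right-child operation forward from any middle-layer point produces a right chain whose layer indices strictly increase; the chain stays inside the middle layers as long as the current vertex is in a layer $L_i$ with $i < r-\ell$, and must terminate once it reaches some vertex of $L_{r-\ell}$ (whose right child would lie in the inner layer $L_{r-\ell+1}$, which is outside the middle layers).

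Second, I would invoke the injectivity remark established just above the lemma: if distinct $u, u' \in L_i$ shared a right child, the arc on the boundary of $\conv(L_i)$ between them on the side facing $z$ would contain an edge $xy$ with $\Delta(x,y,z)$ disjoint from $L_{i+1}$, i.e.\ an empty edge, which by \ref{empty} would yield an empty pentagon or $\ell$ collinear points in $P$ — contradicting the standing assumption. Hence the right-child map is injective on each layer, so maximal right chains are pairwise vertex-disjoint. Combined with forward-determinism, this says each middle-layer point lies in precisely one maximal right chain: extend $v$ forward by iterated right children until reaching $L_{r-\ell}$, and backward by the inverse map until no predecessor in the middle layers exists.

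Third, since every maximal right chain meets $L_{r-\ell}$ in exactly one vertex (its terminus) and distinct chains terminate at distinct vertices of $L_{r-\ell}$, the assignment chain $\mapsto$ terminal vertex is a bijection from the set of maximal right chains to $L_{r-\ell}$. Thus the number of maximal right chains equals $|L_{r-\ell}|$, and \thmref{convposthm} applied to the weakly convex set $L_{r-\ell}$ forces $|L_{r-\ell}| < 8\ell$, giving $|L_{r-\ell}| \le 8\ell-1$. The maximal left chain statement is entirely symmetric. The only non-routine step is the injectivity of the child map via \ref{empty}, and that has already been carried out; so no real obstacle remains and the proof concludes with a \texttt{qed}.
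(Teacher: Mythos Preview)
Your proposal is correct and follows exactly the paper's approach: the lemma is stated with a \qed because the preceding paragraph already supplies the argument (forward determinism of the child map, injectivity via Lemma~\ref{empty} ruling out a shared child, hence disjointness of maximal chains, each terminating in $L_{r-\ell}$, with the size bound coming from Theorem~\ref{convposthm}). You have simply spelled out in slightly more detail what the paper leaves as an immediate consequence of those observations.
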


%\begin{figure}
%\bc \includegraphics[width=.4\textwidth]{pics/F09} \hspace{1cm} \includegraphics[width=.4\textwidth]{pics/F11} \ec
%\caption{(a) The right child $q$ and the left child $p$ of $v$. (b) The quadrilateral $Q(vq)$ and the sector $S[vq]$.}
%\label{fig:children}
%\end{figure}

\begin{figure}
\begin{subfigure}[t]{0.4\textwidth}
\centering
\includegraphics{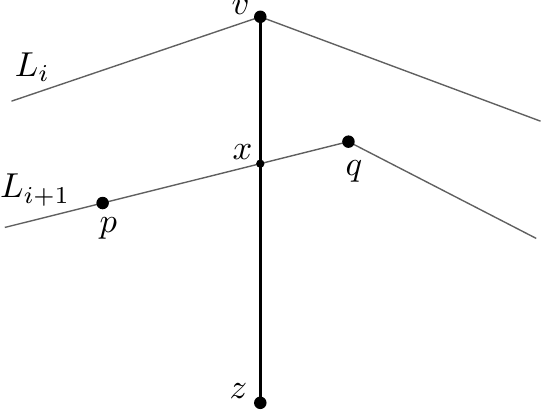} 
\caption{}%The right child $q$ and the left child $p$ of $v$.}
\label{fig:childrena}
\end{subfigure}
\hspace{1cm} 
\begin{subfigure}[t]{0.4\textwidth}
\centering
\includegraphics{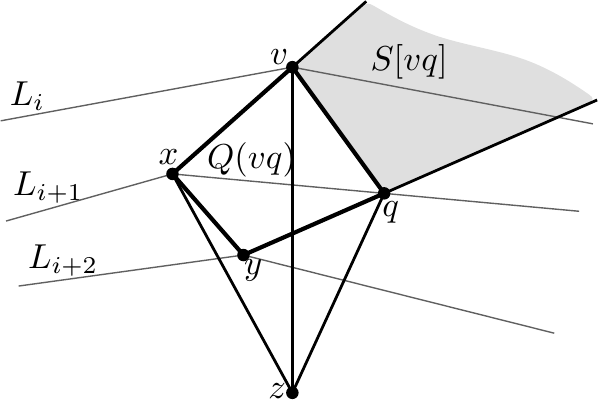}
\caption{}%The quadrilateral $Q(vq)$ and the sector $S[vq]$.}
\label{fig:childrenb}
\end{subfigure}
\caption{(a) The right child $q$ and the left child $p$ of $v$. (b) The quadrilateral $Q(vq)$ and the sector $S[vq]$.}
\end{figure}

%%%%%%

%\begin{proposition} Right chains (also left chains) do not intersect or cross each other.
%\end{proposition}
%\begin{proof}
%If two right chains intersect at a vertex then two vertices have the same right child.
%But these two vertices would form an empty edge, contradicting Lemma~\ref{empty}. 
%Suppose two right chains contain edges which cross between two layers $L_i$ and $L_{i+1}$. 
%Let $ab$ and $cd$ be the edges that cross, with $a$ preceeding $c$ in clockwise order in $L_i$.
%Then $d$ preceeds $b$ in $L_{i+1}$, but this implies that $b$ is not the right child of $a$.
%See Figure~\ref{fig:nocross}.
%\end{proof}

%\begin{figure}
%\bc \includegraphics[width=.5\textwidth]{pics/F10} \ec
%\caption{Two right chains cannot cross.}
%\label{fig:nocross}
%\end{figure}

%%%%%%%%%%%%%%%%%%%%%%%%%%%%%%%%%%%%%%%%%%%%%%%%%%

The edges of a chain are the edges between consecutive vertices of the chain.
A chain $V$ is said to \emph{wrap around} if every ray starting at $z$ intersects the union of the edges of $V$ at least twice.
Since chains advance in the same direction around $z$ with every step, this is equivalent to saying that $V$ covers a total angle of at least $4\pi$ around $z$.
%The next lemma says that if there are enough layers, then there must be chains that wrap around.

\begin{lemma}\label{wrap} 
If the number of middle layers $r-\ell-a$ is at least $32\ell$, then there is a chain with at most $32\ell$ vertices that wraps around.
\end{lemma}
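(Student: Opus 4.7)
I would argue by contradiction: suppose that no chain, right or left, with at most $32\ell$ vertices wraps around, i.e.\ that every such chain has total angular span around $z$ strictly less than $4\pi$. My goal is to contradict the hypothesis $m := r - \ell - a \geq 32\ell$.

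First I would collect the structural ingredients. By Theorem~\ref{convposthm}, every layer $L_i$ has at most $8\ell - 1$ points. The right-child map $f\colon L_i \to L_{i+1}$ is injective (from the proof of Lemma~\ref{chains}, which uses Lemma~\ref{empty}), so each maximal right chain terminates in a distinct vertex of $L_{r-\ell}$, giving at most $8\ell - 1$ maximal right chains; the analogous statement holds for left chains. In addition, Lemma~\ref{empty} forces a vertex of $L_{i+1}$ into each clockwise arc between consecutive vertices of $L_i$, which controls the per-transition angular progress: writing $\delta_R(v) = \phi(v) - \phi(f(v))$ for the clockwise angular displacement of $v \in L_i$ toward its right child (where $\phi$ denotes angular position around $z$), one has $\sum_{v \in L_i} \delta_R(v) \leq 2\pi$, and analogously for $\delta_L$.

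Next I would translate the no-wrapping hypothesis into an upper bound on total chain span. Partitioning any maximal right chain $C$ of length $t$ into at most $\lceil t/32\ell \rceil$ consecutive subchains of $32\ell$ vertices, each with span strictly less than $4\pi$ by assumption, gives $\mathrm{span}(C) < 4\pi\lceil t/32\ell\rceil$; summing over the at most $8\ell-1$ maximal right chains, and symmetrically over left chains, bounds the combined total angular progress $\sum_v (\delta_R(v) + \delta_L(v))$ from above in terms of the total number $N$ of middle-layer points.

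The main obstacle will be establishing a matching lower bound on this combined total that contradicts the upper bound once $m \geq 32\ell$. The intuition is that across many transitions the chains cannot stall at one direction: each layer wraps $z$ once, and the interleaving of $L_i$ and $L_{i+1}$ from Lemma~\ref{empty} forces chain representatives to drift. Concretely, I would fix a family of right chains starting at $L_{a+1}$, track their angular positions layer by layer, and use injectivity (which preserves the cyclic order of the chain representatives in each layer, so they occupy $|L_{a+1}|$ positions whose cyclic gaps sum to $2\pi$ at every layer) together with the bound $|L_i| \leq 8\ell-1$ to argue that the total angular drift summed over these chains grows like a constant multiple of $m$. Combined with the analogous statement for left chains, the precise tuning $32\ell = 4 \cdot 8\ell$ makes the lower bound exceed the upper bound for $m \geq 32\ell$, yielding the desired contradiction and hence a chain of at most $32\ell$ vertices that wraps around.
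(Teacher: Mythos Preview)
Your approach is quite different from the paper's, and the step you yourself flag as ``the main obstacle''---the lower bound on total angular drift---does not close.

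The paper's argument is a short local pigeonhole, with no global counting. Fix any right chain $V=(v_1,\dots,v_t)$ starting in $L_{a+1}$ with $t=32\ell$. By Lemma~\ref{chains} each $v_i$ lies in a unique maximal left chain and there are at most $8\ell-1$ of those, so some left chain $U$ meets $V$ in at least five vertices $p_1,\dots,p_5$ (ordered by depth). Between each consecutive pair $p_i,p_{i+1}$ the segment of $V$ advances clockwise around $z$ and the segment of $U$ anticlockwise, so together they form a closed curve around $z$ contributing at least $2\pi$ of combined angle. Four gaps give combined span at least $8\pi$, hence one of $U,V$ has span at least $4\pi$ and wraps around; both have at most $32\ell$ vertices since they lie in layers $L_{a+1},\dots,L_{a+32\ell}$.

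Your global count cannot produce a contradiction. The cyclic-order preservation you invoke does not force any drift: chain representatives can each advance by an arbitrarily small angle per step while their cyclic order and their gap-sum of $2\pi$ remain unchanged, so there is no nontrivial per-layer lower bound coming from that observation. The only identity available is $\sum_{v\in L_i}(\delta_R(v)+\delta_L(v))=2\pi$ when $|L_i|=|L_{i+1}|$, and strictly less otherwise. Even granting the optimistic $2\pi$ at every one of the $m-1$ transitions, the total is at most $2\pi(m-1)$; on the other side, with $m=32\ell$ every maximal chain has at most $32\ell$ vertices, so the no-wrapping hypothesis caps each chain's span below $4\pi$, and with fewer than $8\ell$ maximal right chains and $8\ell$ maximal left chains the total is below $64\pi\ell$. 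These two bounds are compatible (both sit at roughly $64\pi\ell$, with the lower bound slightly smaller), so no contradiction is forced; accounting for the layers where $|L_{i+1}|>|L_i|$ only weakens the lower bound further. What your sketch is missing is precisely the local interaction between a single right chain and a single left chain through their shared vertices, which is what manufactures the $8\pi$.
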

\begin{proof}
Let $V=(v_1, \dots, v_t)$ be a right chain that starts at a point $v_1\in L_{a+1}$. 
Since $r-\ell-a \geq 32\ell$, it can be assumed that $t=32\ell$.
By Lemma~\ref{chains}, each vertex $v_i$ lies in some left chain, and there are at most $8\ell-1$ maximal left chains, so some left chain intersects $V$ at least five times.
Let $U$ be a left chain that intersects $V$ in the points $p_1, \dots, p_5$, where $p_1$ and $p_5$ are the first and last points of $U$ respectively.

Recall that right chains advance clockwise around $z$ with every step, and left chains anticlockwise.
Therefore, the paths from $p_i$ to $p_{i+1}$ in $U$ and $V$ form a closed curve around $z$.
So these paths cover an angle of $2\pi$ around $z$.
Hence $U$ and $V$ together cover a total angle of at least $8\pi$ around $z$.
This implies that at least one of them covers a total angle of at least $4\pi$, and thus wraps around.
Both $U$ and $V$ have at most $t$ vertices because they lie in the layers $L_{a+1}$ to $L_{a+t}$.
\end{proof}

If $q$ is the right child of a vertex $v$ in a middle layer $L_i$, then associate with $vq$ the following quadrilateral, as illustrated in Figure~\ref{fig:childrenb}.
Let $x$ be the point in $L_{i+1}$ anticlockwise from $q$, 
so $x$ either lies on $vz$ or is the left child of $v$.
Let $y$ be a point in the open triangle $\Delta(x,q,z)$ closest to $xq$. 
Such a $y$ exists in $L_{i+2}$, otherwise $xq$ would be an empty edge.
Then $Q(vq):=vxyq$ is the quadrilateral associated with $vq$.
This quadrilateral is strictly convex by construction.
The triangle $\Delta[x,q,y]$ is empty since $x$ and $q$ are neighbours in $L_{i+1}$ and $y$ is a closest point to $xq$.
%If there were a point $w$ in the triangle $\Delta[v,q,x]$ then $w \in L_i$ and $q$ would be the right child of $w$ and $v$, which is impossible. % Note w could not actually be in L_i.
The triangle $\Delta[v,q,x]$ is empty because it can contain neither a point of $L_i$ nor $L_{i+1}$. %by the definition of $q$ and $x$.
Thus $Q(vq)$ is an empty quadrilateral.
Empty quadrilaterals determine $4$-sectors that must be empty since there are no empty pentagons.
Let $S[vq]$ be the closed $4$-sector determined by $Q(vq)$, that is, $S[v,x,y,q]$ in the notation established previously.

%\begin{figure}
%\bc \includegraphics[width=.5\textwidth]{pics/F11} \ec
%\caption{The quadrilateral $Q(vq)$ and the sector $S[vq]$.}
%\label{fig:sectors}
%\end{figure}

%Then define $\calS(V) := \{ S[e_i]: 1\leq i \leq t-1 \}$ to be the set of $4$-sectors associated with $V$.
%
%%%%%%%%%%%%%%%% Not sure why we want this bit %%%%%%%%
%Note that $Q(e_i)=v_ix_iy_iv_{i+1}$ and $Q(e_{i+1})=v_{i+1}x_{i+1}y_{i+1}v_{i+2}$ have disjoint interiors, although it may be the case that $y_i=x_{i+1}$.
%%This is the only way in which two quadrilaterals in $\Q(V)$ may intersect since each one lies in a different triple of consecutive layers.
%Since each quadrilateral in $Q(e_i)$ lies in a different triple of consecutive layers, it follows that two such quadrilaterals intersect in at most an edge.
%%%%%%%%%%%%%%%%%%%%

Let $V=(v_1,\dots,v_t)$ be a chain and let $e_i := v_iv_{i+1}$ be the edges of $V$.
Let $e_i^\oplus$ be the closed half-plane defined by $e_i$ that does not contain $z$.
Consider a quadrilateral $Q(e_i)=v_ix_iy_iv_{i+1}$ and let $c_i$ be the edge $x_iv_i$ and let $d_i$ be the opposite edge $y_iv_{i+1}$.
Let $c_i^\oplus$ be $c_i^\oplus$ the closed half-plane defined by $c_i$ that contains $d_i$, and let $d_i^\oplus$ be the closed half-plane defined by $d_i$ that contains $c_i$.
%Let $e_i^-$ be the complement of $e_i^+$, and similarly for $c_i$ and $d_i$.
%
With these definitions, the $4$-sector defined by $Q(e_i)$ is $S[e_i] = c_i^\oplus \cap d_i^\oplus \cap e_i^\oplus$.

\begin{lemma}\label{cover} 
If $V=(v_1, \dots, v_t)$ wraps around, then the %union of $\calS(V)$ % $4$-sectors $\{[S(v_iv_{i+1})] | 1\leq i \leq t-1\}$ %corresponding to the quadrilaterals of the denture $D(V)$ 
corresponding $4$-sectors $S[e_i]$ cover the points of the outer layers $L_1$ to $L_a$.
\end{lemma}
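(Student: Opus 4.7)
Let $p$ be a point in an outer layer $L_1 \cup \dots \cup L_a$; the goal is to exhibit an index $i$ with $p \in S[e_i]$. Since $V$ wraps around $z$, its vertices cover an angular range of at least $4\pi$ about $z$. In particular, for any angular direction from $z$, including that of $p$, there are at least two chain edges whose angular range (as seen from $z$) strictly contains this direction. For any such candidate edge $e_i$, the ray from $z$ through $p$ crosses the open segment $e_i$ at a unique point $q_i$.

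I will first verify $p \in e_i^\oplus$ for every such candidate $e_i$: since $p$ lies in an outer layer, it is outside $\conv(L_{a+1}\cup\dots\cup L_r)$, whereas $e_i$ is contained inside this hull. Hence $|zp|>|zq_i|$, placing $p$ strictly past $q_i$ on the ray $zp$, and in particular on the far side of $l(e_i)$ from $z$. For the angular conditions $p \in c_i^\oplus \cap d_i^\oplus$, I will single out the candidate edge $e_i$ whose crossing $q_i$ is outermost, i.e., for which $|zq_i|$ is maximal. The structure of $Q(e_i)$---with $x_i$ either on $v_iz$ or the left child of $v_i$, and $y_i$ in $\Delta(x_i,v_{i+1},z)$ close to the segment $x_iv_{i+1}$---should then place $p$ within the outward wedge that $S[e_i]$ carves out past $e_i$.

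The main obstacle is making this last inclusion rigorous. A single $4$-sector $S[e_i]$ may be bounded, so it is not a priori clear that $p$ (possibly far from $z$) is covered by any particular $S[e_i]$. The wrap-around hypothesis is critical here: it guarantees an outermost crossing edge for which $p$ is ``just beyond'' the chain, so that no further structure of the chain or the inner quadrilaterals blocks the wedge. The verification will proceed by case analysis on whether $x_i$ lies on $v_iz$ or is the left child of $v_i$, and on the position of $y_i$ within $\Delta(x_i, v_{i+1}, z)$; the interplay between the two wraps of $V$ will be used to handle configurations where one wrap's sectors are too narrow to reach $p$ on their own.
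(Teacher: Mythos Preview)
Your plan has a real gap at precisely the step you flag as the main obstacle. Choosing the edge $e_i$ whose crossing $q_i$ with the ray $zp$ is outermost gives $p \in e_i^\oplus$, but it gives you no control over the lines $l(c_i)$ and $l(d_i)$. These are determined by the auxiliary points $x_i$ and $y_i$ in the next two layers, not by the chain vertices or by how the wraps of $V$ are nested. In particular, $y_i$ is any closest point of $L_{i+2}$ inside $\Delta(x_i,v_{i+1},z)$, and the line $l(y_iv_{i+1})$ can cut $p$ off from the sector even when $e_i$ is the outermost crossing. A case analysis on whether $x_i$ lies on $v_iz$ and on where $y_i$ sits inside its triangle does not resolve this, because the ``outermost'' hypothesis says nothing about $y_i$, and the second wrap of $V$ carries no information about $y_i$ either.

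The paper's argument avoids committing to a single edge. It shoots the ray from $z$ \emph{away} from the outer point $u$, finds two edges $e_j,e_k$ of $V$ crossing that ray, and between them isolates a maximal run $e_m,\dots,e_n$ of consecutive edges all having $u\in e^+$. The boundary conditions $u\in e_{m-1}^\ominus$ and $u\in e_{n+1}^\ominus$, together with $v_m$ lying left of $l(uz)$ and $v_{n+1}$ lying right of it, force $u\in c_m^\oplus$ and $u\in d_n^\oplus$. The key step, absent from your plan, is a transition property linking \emph{consecutive} sectors: since $y_i$ equals or precedes $x_{i+1}$ in their common layer, it is impossible that $u\in d_i^-\cap c_{i+1}^-$ for any $m\le i<n$. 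Hence, starting from $u\in c_m^\oplus$ one sweeps forward to the first index $q$ with $u\in d_q^\oplus$, and then $u\in S[e_q]$. This sweep across a range of edges is what makes the covering work; a single best edge does not suffice.
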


\begin{proof}
Let $u$ be a point in $L_1 \cup \dots \cup L_{a}$.
Without loss of generality, suppose that $V$ is a right chain, and that the line $l(uz)$ is vertical with $u$ above $z$.
Consider the ray $h$ contained in $l(uz)$ that starts at $z$ and does not contain $u$.
Since $V$ wraps around, it crosses $h$ at least twice.
Therefore there are two non-consecutive edges $e_j$ and $e_k$ of $V$ that intersect $h$ (with $j<k$), and there is an edge $e_p$ between $e_j$ and $e_k$ that intersects the line segment $zu$.

%Hence the subchain of $V$ from $v_j$ to $v_{k+1}$ covers an angle of $2\pi$ around $z$.
%From now on consider only this part of $V$.

Note that $u$ lies in $e_j^-$ and $e_k^-$, but $u$ lies in $e_p^+$.
Let $\tilde{V}$ be the maximal subchain of $V$ that contains $e_p$ and such that $u \in e^+$ for every edge $e$ of $\tilde{V}$.
Let $e_m$ and $e_n$ be the first and last edges of $\tilde{V}$.
Since $e_j$ and $e_k$ are not in $\tilde{V}$ and $j< m \leq n <k$, the edges $e_{m-1}$ and $e_{n+1}$ are %in $V$ but 
not in $\tilde{V}$.
Thus $u \in e_{m-1}^\ominus \cap e_m^+$, as shown in Figure~\ref{fig:endsa}. 
Also, $v_m$ lies to the left of $l(uz)$ since $j<m\leq p$.
This implies that $u$ and $v_{m+1}$ are on the same side of $l(c_m)$, so $u \in c_m^\oplus$.
Furthermore, $u \in e_n^+ \cap e_{n+1}^\ominus$, and $v_{n+1}$ lies to the right of $l(uz)$ since $p\leq n <k$, as shown in Figure~\ref{fig:endsa} also.
This implies that $u \in d_n^\oplus$.

%\begin{figure}
%\bc \includegraphics[width=.4\textwidth]{pics/F12} \hspace{1cm} \includegraphics[width=.4\textwidth]{pics/F13} \ec
%\caption{(a) $u \in c_m^\oplus$ and $u \in d_n^\oplus$. (b) $u$ cannot be in both $d_i^-$ and $c_{i+1}^-$.}
%\label{fig:ends}
%\end{figure}

\begin{figure}
\begin{subfigure}[t]{0.5\textwidth}
\centering
\includegraphics{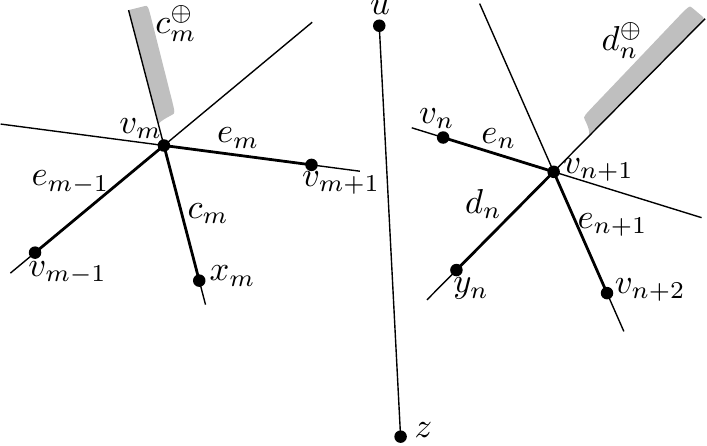} 
\caption{}%$u \in c_m^\oplus$ and $u \in d_n^\oplus$.}
\label{fig:endsa}
\end{subfigure}
\qquad 
\begin{subfigure}[t]{0.4\textwidth}
\centering
\includegraphics{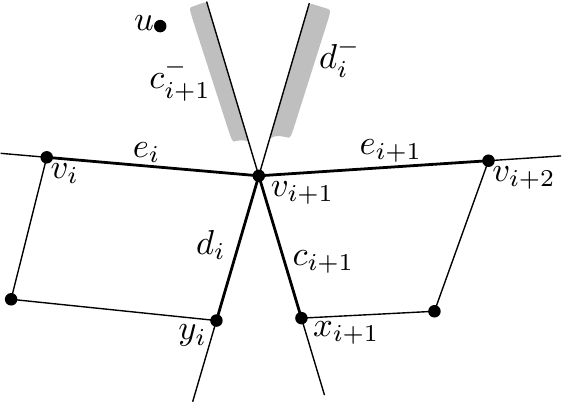}
\caption{}%$u$ cannot be in both $d_i^-$ and $c_{i+1}^-$.}
\label{fig:endsb}
\end{subfigure}
\caption{(a) $u \in c_m^\oplus$ and $u \in d_n^\oplus$. (b) $u$ cannot be in both $d_i^-$ and $c_{i+1}^-$.}
\end{figure}

%Careful here. It's not true in general that if $u \in e_i^- \cap e_{i+1}^+$ then $u \in c_{i+1}^+$.
%This is true when $v_{i+1}$ lies to the `left' of the line $zu$.

Since $u \in e_i^+ \cap e_{i+1}^+$ for $m \leq i \leq n-1$, 
the fact that $y_i$ precedes $x_{i+1}$ in $L_{i+2}$ (or $y_i = x_{i+1}$) means that it is not possible for $u$ to be in both $d_i^-$ and $c_{i+1}^-$; see Figure~\ref{fig:endsb}.
In order to prove that $u$ is in some $S[e_i]= c_i^\oplus \cap d_i^\oplus \cap e_i^\oplus$, it suffices to show that $u \in c_i^\oplus \cap d_i^\oplus$ for some $i \in \{m, \dots, n \}$.
Let $q$ be minimal such that $u \in d_q^\oplus$.
Such a $q$ exists because $u \in d_n^\oplus$.
Then either $q=m$ or $u \in d_{q-1}^-$, so in any case $u \in c_q^\oplus$.
Therefore $u$ lies in $S[e_q]$.
\end{proof}

%\begin{figure}
%\bc \includegraphics[width=.5\textwidth]{pics/F13} \ec
%\caption{$u$ cannot be in both $d_i^-$ and $c_{i+1}^-$.}
%\label{fig:consec}
%\end{figure}

%%%%%%%%%%%%%%%%%%%%%%%%%%%%%%%%%%%%%%%

% If there are lots of layers in the middle, the outside must be small.
Lemma \ref{wrap} says that if the number of middle layers $r-\ell-a$ is at least $32\ell$, then there is a chain $V=(v_1, \dots, v_t)$ with $t=32\ell$ that wraps around.
Since $P$ contains no empty pentagons, Lemma~\ref{cover} then implies that every point in the outer layers lies on one of the lines $l(c_i)$ or $l(d_i)$ that bound the sectors $S[e_i]$ corresponding to $V$. 
Thus the number of points in the outer layers is at most $2t(\ell-3)= 64\ell(\ell-3)$.
%
% But the outside is big.
Recall however that $a$ was chosen so that the outer layers contained at least $64\ell(\ell-1)$ points,
% So there aren't so many layers.
 so in fact the number of middle layers is less than $32\ell$.
% So the middle isn't too big.
Therefore (by Theorem~\ref{convposthm}) the number of points in the middle layers is $\left| L_{a+1} \cup \dots \cup L_{r-\ell} \right| < 32\ell \times 8\ell = 256\ell^2$.
%\leq (32\ell-1)(8\ell-1) \leq 256\ell^2-1$.
% The outside also isn't that big.
As noted at the beginning of the proof, %since $a$ is minimal
$\left| L_1 \cup \dots \cup L_a \right| < 64\ell^2$,
% And the inner bit isn't big at all.
and also $\left| L_{r-\ell+1} \cup \dots \cup L_r \right| < 8\ell^2$.
% So in total, it's bounded.
Adding everything up gives $|P| = \left| L_1 \cup \dots \cup L_r \right| < 328\ell^2$.
This contradicts the assumption that $|P| \geq 328\ell^2$, and so in fact $P$ does contain an empty pentagon.
This completes the proof of Theorem~\ref{mainthm}. \qed

%%%%%%%%%%%%%%%%%%%%%%%%%%%%%%%%%%%%%%%%%%%%%%%%%%%%%%
\appendix
\section{Proof of Lemma~\ref{empty}}\label{append}
%%%%%%%%%%%%%%%%%%%%%%%%%%%%%%%%%%%%%%%%%%%%%%%%%%%%%%

Lemma~\ref{empty} appears implicitly in the paper of Abel et al.~\cite{abeletal}. The following proof is adapted directly from that paper, and the figures are reproduced with the kind permission of the authors. For simplicity, consider a point set $P$ with $\ell$ layers, so the statement becomes: % maybe change this and proof??

\theoremstyle{plain}
\newtheorem*{rlemma}{Lemma~\ref{empty}.1}
\begin{rlemma}
Let $L_1, \dots, L_\ell$ be the convex layers of a point set $P$. If $L_1$ contains an empty edge then $P$ contains an empty pentagon or $\ell$ collinear points.
\end{rlemma}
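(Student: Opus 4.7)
The plan is to argue by contradiction and iterate down through the convex layers, using the no-empty-pentagon hypothesis to propagate an empty-edge structure from one layer to the next while accumulating one collinear point per layer on the line $l(xz)$. Assume for contradiction that $P$ contains neither an empty pentagon nor $\ell$ collinear points, with $xy$ the given empty edge of $L_1$, so $\Delta(x,y,z) \cap L_2 = \emptyset$.

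\emph{Propagation step.} Assume inductively that $x^{(i)} y^{(i)}$ is an empty edge of $L_i$ with $x^{(i)}$ on segment $xz$ and $y^{(i)}$ on segment $yz$ (for $i=1$, take $x^{(1)} = x$ and $y^{(1)} = y$). Since $z$ lies strictly inside $\conv(L_{i+1})$, there is a unique edge $u_1 u_2$ of $L_{i+1}$ whose supporting line separates $z$ from the empty triangle $\Delta(x^{(i)}, y^{(i)}, z)$, with $u_1$ on the $x^{(i)}$-side. I would first show that $u_1 \in x^{(i)}z$ (and symmetrically $u_2 \in y^{(i)} z$): if instead $u_1$ lay strictly off the ray $zx^{(i)}$, the quadrilateral $x^{(i)} y^{(i)} u_2 u_1$ would be strictly convex and empty of $P$, because its upper trapezoidal part lies in $\conv(L_i) \setminus \conv(L_{i+1})$ (and so contains no point of any layer $L_j$ with $j \geq i+1$) while its lower part is contained in the cone at $z$, which is empty of $L_{i+1}$. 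Then taking $q$ to be the point of $P$ closest to $l(u_1 u_2)$ in the 4-sector $S(u_1, x^{(i)}, y^{(i)}, u_2)$ yields an empty strictly convex pentagon $q u_1 x^{(i)} y^{(i)} u_2$, contradicting the hypothesis. A symmetric closest-point argument then shows that $u_1 u_2$ is itself an empty edge of $L_{i+1}$: any $L_{i+2}$-point in the open triangle $\Delta(u_1,u_2,z)$, taken closest to $l(u_1 u_2)$ as $w$, produces an empty strictly convex pentagon $x^{(i)} y^{(i)} u_2 w u_1$. Setting $x^{(i+1)} := u_1$ and $y^{(i+1)} := u_2$ completes the step, and $x^{(i+1)} \in x^{(i)} z \subseteq xz$ preserves the induction hypothesis.

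\emph{Iteration and conclusion.} Applying the step for $i = 1, 2, \ldots, \ell - 2$ yields one new point $x^{(i+1)} \in L_{i+1} \cap xz$ in each intermediate layer. Together with $x \in L_1$ and $z \in L_\ell$, these form a set of $\ell$ collinear points on $l(xz)$, one per layer, contradicting the assumption of at most $\ell - 1$ collinear points.

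The main obstacle is the rigorous verification of the two pentagon constructions: ensuring \emph{strict convexity} of each pentagon (non-collinearity of every vertex triple, which relies on the ``strictly off the ray'' case hypothesis in the first construction and on placing the closest witness in the correct 4-sector in the second), together with \emph{emptiness} of its interior. Emptiness uses the nested convex-layer structure to exclude points of $L_1, \ldots, L_i$ (lying on or outside $\partial\conv(L_i)$ away from the edge $x^{(i)}y^{(i)}$), the emptiness of the cone to exclude points of $L_{i+1}$ from the upper region, and the extremal closest-witness choice to exclude points of every deeper layer from the lower triangular region.
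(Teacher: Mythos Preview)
Your propagation step has a genuine gap: you claim that the follower endpoints $u_1,u_2$ must lie on the segments $x^{(i)}z$ and $y^{(i)}z$ respectively, but your argument for $u_1\in x^{(i)}z$ tacitly assumes the $4$-sector $S(u_1,x^{(i)},y^{(i)},u_2)$ contains a point of $P$. That sector is bounded by the line $l(y^{(i)}u_2)$, and if $u_2$ happens to lie on the segment $y^{(i)}z$ then $l(y^{(i)}u_2)=l(y^{(i)}z)$ passes through $z$, so $z$ sits on the \emph{boundary} of the open sector rather than in its interior. In that case there is no reason for $P$ to meet the sector, and you cannot produce the closest point $q$ or the empty pentagon. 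The symmetric argument for $u_2$ has the same defect when $u_1\in x^{(i)}z$. In other words, the follower can be ``one-sided aligned'' (only one endpoint on its ray), and your scheme does not rule this out.

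The paper's proof confronts exactly this obstacle. It introduces the trichotomy double-/left-/right-aligned for the follower and shows (your argument, essentially) that one of the three must occur. It then iterates, and instead of forcing double-alignment at every step, it finds the first step where double-alignment fails (say the follower becomes left-aligned) and then the first later step where left-alignment fails. At that transition the five points $x_{j-2}y_{j-2}y_{j-1}y_{j}x_{j-1}$ form an empty strictly convex pentagon. So the missing idea is precisely this alignment-tracking and the pentagon that appears when the alignment type switches; your inductive hypothesis ``$x^{(i)}\in xz$ and $y^{(i)}\in yz$'' is too strong to maintain.
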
 

\begin{proof}
Suppose for contradiction that $P$ contains no empty pentagon and no $\ell$ collinear points.
Let $z$ be a point in the innermost layer $L_\ell$ of $P$. %check it works!!
Suppose $xy$ is an empty edge of $L_i$ for some $i \in \{1, \dots, \ell -2\}$.
In this case, the intersection of the boundary
of $\conv(L_{i+1})$ and $\Delta(x,y,z)$ is contained in an edge
$pq$ of $L_{i+1}$.  
Call $pq$ the \emph{follower} of $xy$.
First some properties of followers are established.

%\Figure{Follower}{\includegraphics{Follower}}{(a)
%  $\protect\overrightarrow{xy}$ is empty, (b) $5$-hole $\{x,y,p,q,r\}$
%}

\begin{claim}\label{emptyfoll} If $pq$ is the follower of $xy$, then $pxyq$ is an empty quadrilateral and $pq$ is empty.
\end{claim}

\begin{proof}
  Let $Q:=pxyq$. Since $p$ and $q$ are in the interior of
  $\conv(L_i)$, both $x$ and $y$ are corners of $Q$.  Both $p$ and $q$
  are corners of $Q$, otherwise $xy$ would not be empty.
  Thus $Q$ is in strictly convex position. $Q$ is empty by the
  definition of $L_{i+1}$.

  Suppose that $pq$ is not empty; that is,
  $\Delta(p,q,z)\cap L_{i+2}\neq\emptyset$.  Then the $4$-sector $S(p,x,y,q) \neq \emptyset$, so $P$ contains an empty pentagon.  This contradiction proves that
  $pq$ is empty.
\end{proof}

As illustrated in \figref{Aligned}(a)--(c), the follower
$pq$ of $xy$ is said to be:
\begin{itemize}
\item \emph{double-aligned} if $p\in l(xz)$ and
  $q\in l(yz)$,
\item \emph{left-aligned} if $p\in l(xz)$ and
  $q\not\in l(yz)$,
\item \emph{right-aligned} if $p\not\in l(xz)$ and
  $q\in l(yz)$.
\end{itemize}

\Figure{Aligned}{\includegraphics{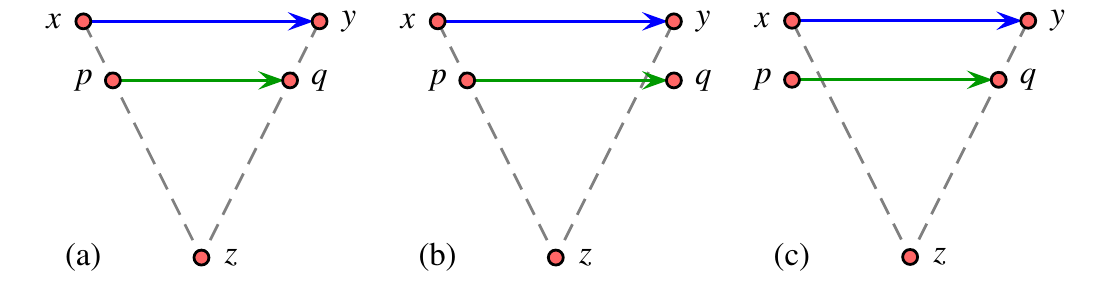}}{(a) Double-aligned. (b)
  Left-aligned. (c) Right-aligned.}

\begin{claim}\label{alignment} If ${pq}$ is the follower of ${xy}$, then ${pq}$ is either
  double-aligned or left-aligned or right-aligned.
\end{claim}

\begin{proof}
  Suppose that ${pq}$ is neither double-aligned nor
  left-aligned nor right-aligned, as illustrated in \figref{Hole}(a). By Claim~\ref{emptyfoll}, $pxyq$ is an empty quadrilateral. But the $4$-sector $S(p,x,y,q)$ contains the point $z$, so $P$ contains an empty pentagon.
\end{proof}

Returning to the proof of Lemma~\ref{empty}.1, let ${x_1y_1}$ be the empty edge of $L_1$.
For $i=2,3,\dots,\ell-1$, let ${x_iy_i}$ be the
follower of ${x_{i-1}y_{i-1}}$.  By Claim~\ref{emptyfoll} (at
each iteration), ${x_iy_i}$ is empty.  For some
$i\in\{2,\dots,\ell-2\}$, the edge ${x_iy_i}$ is not
double-aligned, as otherwise $\{x_1,x_2,\dots,x_{\ell-2},z\}$ are
collinear and $\{y_1,y_2,\dots,y_{\ell-2},z\}$ are collinear, which
implies that $\{x_1,x_2,\dots,x_{\ell-1},z\}$ are collinear or
$\{y_1,y_2,\dots,y_{\ell-1},z\}$ are collinear by Claim~\ref{alignment}.  Let
$i$ be the minimum integer in $\{2,\dots,\ell-2\}$ such that
${x_iy_i}$ is not double-aligned.  Without loss of
generality, ${x_iy_i}$ is left-aligned.  On the other
hand, ${x_jy_j}$ cannot be left-aligned for all
$j\in\{i+1,\dots,\ell-1\}$, as otherwise $\{x_1,x_2,\dots,x_{\ell-1},z\}$ are
collinear.  Let $j$ be the minimum integer in $\{i+1,\dots,\ell-1\}$ such that
${x_jy_j}$ is not left-aligned.  Thus
${x_{j-1}y_{j-1}}$ is left-aligned and
${x_jy_j}$ is not left-aligned.  It follows that
$x_{j-2}y_{j-2}y_{j-1}y_{j}x_{j-1}$ is an empty pentagon, as
illustrated in \figref{Hole}(b).  This contradiction completes the proof.
\end{proof}

\Figure{Hole}{\includegraphics{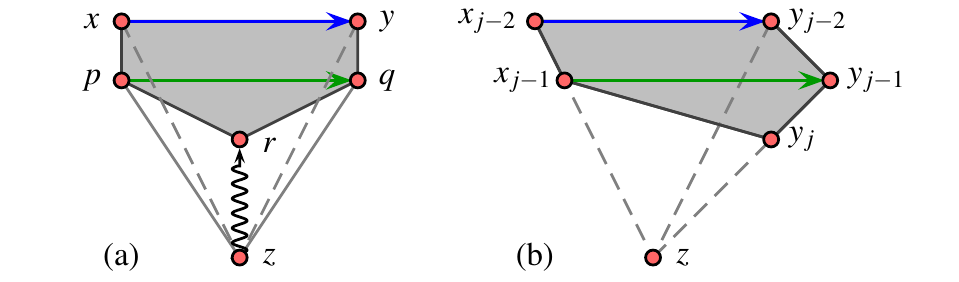}}{(a) Neither double-aligned nor
  left-aligned nor right-aligned. (b) The empty pentagon
  $x_{j-2}y_{j-2}y_{j-1}y_{j}x_{j-1}$.}

\bibliographystyle{siam}
\bibliography{references}

%%%%%%%%%%%%%%%%%%%%%%%%%%%%%%%%%%%%%%%%%%%%%%%%%%%%%%%%%%%%%
%%%%%%%%%%%%%%%%%%%%%%%%%%%%%%%%%%%%%%%%%%%%%%%%%%%%%%%%%%%%%
%%%%%%%%%%%%%%%%%%%%%%%%%%%%%%%%%%%%%%%%%%%%%%%%%%%%%%%%%%%%%

\end{document}